\title{Globalization of Partial Actions of Ordered Groupoids on Rings}
\author[Lautenschlaeger and Tamusiunas]{Wesley G. Lautenschlaeger and Thaísa Tamusiunas$^*$}
\address{Instituto de Matem\'{a}tica, Universidade Federal do Rio Grande do Sul,  Av. Bento Gon\c{c}alves, 9500, 91509-900. Porto Alegre-RS, Brazil}
\email[Lautenschlaeger]{wesleyglautenschlaeger@gmail.com}
\email[Tamusiunas]{thaisa.tamusiunas@gmail.com}
\date{}
\thanks{$^*$ Corresponding author}
\subjclass[2020]{ Primary. 20L05. Secondary. 18B40, 20N02, 16S35, 20M18} 
    \keywords{ordered groupoid, partial action, ordered globalization, Morita theory, inverse semigroup.}
\newcounter{contador}
\numberwithin{contador}{section}
\newtheorem{theorem}[contador]{Theorem}
\newtheorem{prop}[contador]{Proposition}
\newtheorem{lemma}[contador]{Lemma}
\theoremstyle{definition}
\newtheorem{defi}[contador]{Definition}
\newtheorem{obs}[contador]{Remark}
\newtheorem{exe}[contador]{Example}
\newcommand{\G}{\mathcal{G}}
\newcommand{\cH}{\mathcal{H}}
\begin{document}

\begin{abstract}
   We provide a necessary and sufficient condition to the existence of an ordered globalization of a partial ordered action of an ordered groupoid on a ring, and we also present criteria to obtain uniqueness. Furthermore, we apply those results to obtain a Morita context and to show that an inverse semigroup partial action has a globalization (unique up to equivalences) if and only if it is unital.
\end{abstract}

\maketitle

\section{Introduction}

Much has been developed in the theory of actions and partial actions of groupoids. For instance, Galois theories were considered in \cite{bagio2012partial}, \cite{bagio2022galois}, \cite{cortes2017characterisation}, \cite{garta2024},
\cite{paques2013galois}, \cite{paques2018galois}, \cite{paques2021galois} and \cite{pedrotti2023injectivity}, actions in sets and duality theorems in \cite{della2023groupoid}, generalizations of classic results of group theory such as isomorphism theorems, Lagrange's Theorem, Sylow theorems, classification of groupoids and properties of cosets in \cite{avila2020notions}, 
\cite{avila2020isomorphism}, \cite{beier2023generalizations} and \cite{marin2022groupoids}, and partial actions of ordered groupoids and categories in \cite{bagio2010partial}, \cite{gilbert2005actions} and \cite{nystedt}. 

The concept of partial group action was introduced by R. Exel in \cite{exel1998partial} in order to classify a certain class of $C^*$-algebras. J. Kellendonk and M. Lawson \cite{kellaw2004} investigated applications in several other areas, such as graph immersions and inverse semigroups \cite{kellaw2004}, $\mathbb{R}$-trees \cite{Rimlinger} and tilings of the Euclidean space \cite{kellaw2000}. Much more about this theory and applications can be read in Exel's book \cite{exel2017partial}, and also in the surveys written by M. Dokuchaev in \cite{dokuchaev2011partial} and \cite{dokuchaev2019recent}, and by E. Batista in \cite{batista2017partial}, in the later especially in relation to Hopf algebras and weak Hopf algebras. 

The definition of partial action of a groupoid was given by D. Bagio and A. Paques in \cite{bagio2012partial}. Among other results, they proved a condition for the existence and uniqueness of a globalization. Partial ordered actions of ordered groupoids on rings were introduced in \cite{bagio2010partial}, but so far no study on the existence of globalization has been done.

The globalization problem has been investigated in several other contexts. For instance, in \cite{dokuchaev2005associativity} it was proved that a partial action of a group on a unital algebra is globalizable if and only if each ideal of the action is unital, and it is unique up to equivalence. For categories acting partially on sets, a globalization is not unique, but a globalization with an additional universal property, called \emph{universal globalization}, is unique up to equivalence, and such a globalization always exists \cite[Theorem 4]{nystedt}. For semigroups acting partially on sets, there exist two non-isomorphic universal globalizations \cite{kudryavtseva}. For Hopf algebras acting partially on a unital algebra, a globalization is not unique, but a globalization with an additional minimal property, called \emph{minimal globalization}, is \cite[Theorem 3]{AB}.


The problem of determining when a partial action can be realized as a restriction of a global one is very relevant, because it allows us to undestand how the partial theory behaves compared to the global one. The issue of uniqueness is particularly crucial, as it establishes a well-defined framework for transitioning to global actions. Thus, the primary aim of this work is to analyze the existence and uniqueness of ordered globalizations of partial ordered actions of ordered groupoids on rings. In Section 2, we provide a background on the partial ordered actions of ordered groupoids and establish some notation. In Section 3, we prove that a partial ordered action of an ordered groupoid on a ring has a globalization if and only if it is unital. Section 4 introduces strong partial ordered actions, pseudoassociative groupoids and minimal globalizations, showing that a unital strong partial ordered action of a pseudoassociative groupoid on a ring always has a unique minimal globalization. Finally, in Section 5, we present two applications: we construct a Morita context between the crossed products of an ordered groupoid acting partially on a ring and its globalization, and we apply our results to the case of inverse semigroups acting on rings via the Ehresmann-Schein-Nambooripad (ESN) Theorem. 

Throughout, rings and algebras are associative unless otherwise stated.

\section{Preliminaries}

We recall that a \emph{groupoid} $\G$ is a small category in which every morphism is an isomorphism. We denote by $\G_0$ the set of objects of $\G$. Observe that $\text{id} :\G_0 \rightarrow \G$, given by $\text{id}(x)=\text{id}_x$, is an injective map, from which we identify $\G_0\subseteq \G$. Given $g \in \G$, the \emph{domain} and the \emph{range} of $g$ will be denoted by $d(g)$ and $r(g)$, respectively. Hence, $d(g) = g^{-1}g$ and $r(g) = gg^{-1}$. For all $g, h \in \G$, we write $\exists gh$ whenever the product $gh$ is defined. We denote $\G_2 = \{(g,h) \in \G \times \G : \exists gh \}$.

We say that $\G$ is \emph{ordered} if there is a partial order $\leq$ on $\G$ such that: \begin{itemize}

\item [(OG1)] If $g \leq h$ then $g^{-1} \leq h^{-1}$;

\item [(OG2)] For all $g,h,k,\ell \in \G$ such that $g \leq h$, $k \leq \ell$, $\exists gk$ and $\exists h\ell$, we have $gk \leq h\ell$;

\item [(OG3)] Given $g \in \G$ and $e \in \G_0$ such that $e \leq d(g)$, there is a unique element $(g | e) \in \G$ which satisfies $(g | e) \leq g$ and $d(g | e) = e$.

\item [(OG3*)] Given $g \in \G$ and $e \in \G_0$ such that $e \leq r(g)$, there is a unique element $(e | g) \in \G$ which satisfies $(e | g) \leq g$ and $r(e | g) = e$. \end{itemize}

In a partially ordered set $P$, we say that $z \in P$ is the \emph{meet} of $x \in P$ and $y \in P$ if $z \leq x$, $z \leq y$ and if $w \in P$ is such that $w \leq x,y$ then $w \leq z$. We denote $z = x \wedge y$. A poset where every pair of elements has a meet is called a \emph{meet semilattice}. If $\G$ is such that $\G_0$ is a meet semilattice with respect to $\leq$, we say that $\G$ is an \emph{inductive} groupoid.

\begin{defi}\label{par-groupoid}
Let $\G$ be a groupoid and $A$ be a ring. We define $\alpha = (A_g,\alpha_g)_{g \in \G}$ as a \emph{partial groupoid action} of $\G$ on $A$ if:

1.$A_{r(g)} \triangleleft A$, $A_g \triangleleft A_{r(g)}$, and $\alpha_g : A_{g^{-1}} \to A_g$ is an isomorphism of rings, for all $g \in \G$;

2. The following conditions hold:\begin{itemize}
    \item[(P1)] $\alpha_e = \text{Id}_{A_e}$, for all $e \in \G_0$, and $A = \sum_{e \in \G_0} A_e$;
    \item[(P2)] $\alpha_{h}^{-1}(A_{g^{-1}} \cap A_h) \subseteq A_{(gh)^{-1}}$, for all $(g,h) \in \G_2$;
    \item[(P3)] $\alpha_g \circ \alpha_h(a) = \alpha_{gh}(a)$, for all $(g,h) \in \G_2, a \in \alpha_h^{-1}(A_{g^{-1}}\cap A_h)$.
\end{itemize}\end{defi}

Furthermore, a partial action $\alpha$ is said to be a \emph{partial ordered action}, or simply \emph{P.O. action}, if the following condition holds:
\begin{enumerate}
    \item[(PO)] If $g \leq h$, then $A_g \subseteq A_h$ and $\alpha_{g} = \alpha_h|_{A_{g^{-1}}}$.
\end{enumerate}

\begin{obs} We highlight some facts concerning the definition of partial action of a groupoid. \begin{itemize}
\item[1.] The concept of a partial groupoid action was first defined by Bagio and Paques in \cite{bagio2012partial}, although the authors did not require the condition $A = \sum_{e \in \G_0} A_e$. We incorporate it into the definition in order to recover the concept of partial group actions on rings. 
\item[2.] A similar, but stronger, condition has already appeared in \cite{nystedt2018artinian}. The property (P4) of \cite{nystedt2018artinian} requires $A = \displaystyle\bigoplus_{e \in \G_0} A_e$. This type of action was later called \emph{orthogonal partial action} \cite[Section 2]{lata2021galois}. In a P.O. action of an ordered groupoid, the orthogonality is translated into $A = \displaystyle\bigoplus_{e \in \text{max }\G_0} A_e$, where $\text{max }\G_0$ is the set of maximal elements in $\G_0$ relative to the order of $G$. This means that given $f_1, f_2 \in \text{max }\G_0$ such that $A_{f_1} \neq A_{f_2}$, for each $e \in \G_0$ such that $e \leq f_1, f_2$, we have $A_e = 0$. Therefore, this condition is quite restrictive in the ordered case, and that is why we are asking for a weaker condition.
\end{itemize}
\end{obs}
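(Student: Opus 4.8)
The plan is to read off the asserted characterization directly from condition (PO) together with the summation condition (P1), treating the objects $\G_0 \subseteq \G$ as a sub-poset. The one fact that does all the work is the object-level instance of (PO): if $e,f \in \G_0$ with $e \leq f$, then $A_e \subseteq A_f$. Combined with $A = \sum_{e \in \G_0} A_e$ from (P1), this lets me collapse the index set to the maximal objects: for each $e \in \G_0$ I would choose a maximal $f \in \text{max }\G_0$ with $e \leq f$ (this step needs that every object lies below a maximal one, which I would secure by a chain condition or finiteness assumption on $\G_0$), so that $A_e \subseteq A_f \subseteq \sum_{f \in \text{max }\G_0} A_f$ and hence $A = \sum_{f \in \text{max }\G_0} A_f$.

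Next I would upgrade this to the direct-sum statement and explain why the ordered orthogonality is so restrictive. If $A = \bigoplus_{e \in \G_0} A_e$, the family $\{A_e\}_{e \in \G_0}$ is independent, so its subfamily $\{A_f\}_{f \in \text{max }\G_0}$ is independent too; together with the spanning statement above this gives $A = \bigoplus_{f \in \text{max }\G_0} A_f$, the translated orthogonality condition. Moreover every non-maximal summand must vanish: for non-maximal $e$ pick $f \in \text{max }\G_0$ with $e < f$; then $A_e \subseteq A_f$ while independence forces $A_e \cap A_f = 0$, so $A_e = 0$.

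Finally I would deduce the concrete consequence stated at the end of the remark. Given $f_1, f_2 \in \text{max }\G_0$ with $A_{f_1} \neq A_{f_2}$, these index distinct summands, so independence yields $A_{f_1} \cap A_{f_2} = 0$; and for any $e \in \G_0$ with $e \leq f_1$ and $e \leq f_2$ the instance of (PO) gives $A_e \subseteq A_{f_1} \cap A_{f_2} = 0$, whence $A_e = 0$. The step I expect to be most delicate is the bookkeeping around maximal objects: both the reduction of the sum (existence of a maximal $f \geq e$) and the meaning of directness over the raw index set $\text{max }\G_0$ require care, since two distinct maximal idempotents carrying the same ideal would otherwise force that common ideal to be zero. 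Phrasing independence in terms of genuine summands rather than the index set resolves this, and the remainder is a routine unwinding of (PO) and (P1).
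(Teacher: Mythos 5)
Your argument is correct and is precisely the (implicit) justification behind the remark, which the paper states without proof: the object-level instance of (PO) gives $A_e \subseteq A_{f_1} \cap A_{f_2}$, and directness of the sum over maximal objects forces that intersection to vanish. The caveats you flag (every object lying below a maximal one, and reading the direct sum over genuine distinct summands rather than over the raw index set $\text{max}\,\G_0$) are sensible and do not change the conclusion.
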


By \cite[Lemma 1.1]{bagio2012partial}, it is also true that:
\begin{enumerate}
    \item[(i)] $\alpha_g^{-1} = \alpha_{g^{-1}}$, for all $g \in \G$.\label{propriedadesacpargrp}
    \item[(ii)] $\alpha_g(A_{g^{-1}} \cap A_h) = A_g \cap A_{gh}$, for all $(g,h) \in \G_2$.
\end{enumerate}

A \emph{unital partial action} of $\G$ on $A$ is a partial action $\alpha$ such that every $A_g$, for $g \in \G$, is generated by a central idempotent $1_g$ of $A$. A ring $A$  has \emph{local units} if there is a set $E \subseteq A$ of central idempotents such that for every $a \in A$ there is $b \in E$ such that $ba = a$.  We say that $\alpha$ is \emph{preunital} if $A_e$ is a unital ring with central and idempotent identity $1_e$, for all $e \in \G_0$. Notice that the condition (P1) implies that if there is a preunital partial action of $\G$ on $A$, then $A$ has local units, but it is not necessarily unital unless $\G_0$ is a finite set.

Two P.O. actions $\alpha = (A_g,\alpha_g)_{g \in \G}$ of $\G$ on a ring $A$ and $\gamma = (C_g,\gamma_g)_{g \in \G}$ of $\G$ on a ring $C$ are \emph{equivalent} (we denote by $\alpha \simeq \gamma$) if there is a collection of ring isomorphisms $\{\varphi_e : A_e \to C_e : e \in \G_0\}$ such that
\begin{enumerate}
    \item[(i)] $\varphi_{r(g)}(A_g) = C_g$, for all $g \in \G$, 

    \item[(ii)] $\varphi_{r(g)} \circ \alpha_g(a) = \gamma_g \circ \varphi_{d(g)}(a)$, for all $a \in A_{g^{-1}}$.
\end{enumerate}

A global action of $\G$ on $A$ is a partial action such that $A_g = A_{r(g)}$, for all $g \in \G$. Ordered global actions and unital global actions are defined analogously. Of course a global action is unital if and only if it is preunital.

Let $\G$ be an ordered groupoid and $\alpha = (A_g,\alpha_g)_{g \in \G}$ a P.O. action of $\G$ on a ring $A$. We define the \emph{partial skew groupoid ring} $A \ltimes_\alpha \G$ by $A \ltimes_\alpha \G = \bigoplus_{g \in \G} A_g\delta_g$, where each $\delta_g$ is a symbol. We consider the usual sum and the product is given by
\begin{align*}
    (a_g\delta_g)(b_h\delta_h) = \begin{cases}
        \alpha_{g}(\alpha_{g^{-1}}(a_g)b_h)\delta_{gh}, \text{ if } \exists gh, \\
        0, \text{ otherwise,}
    \end{cases}
\end{align*}
linearly extended. We have that $A \ltimes_\alpha \G$ is a ring, but it is not necessarily associative nor unital. A condition to ensure the associativity of  $A \ltimes_\alpha \G$ is to ask 
$\alpha$ to be a unital partial action (c.f. \cite[Proposition 3.1]{bagio2010partial} and \cite[Corollary 3.2]{dokuchaev2005associativity}). Also, by \cite[Propostion 3.3]{bagio2010partial}, if $\G_0$ is finite and $\alpha$ is preunital, then $A \ltimes_\alpha \G$ is unital with identity $1_{A \ltimes_\alpha \G} = \sum_{e \in \G_0} 1_e\delta_e$. If $\alpha$ is preunital and $\G_0$ is not finite, then $A \ltimes_\alpha \G$ is not unital in general, but it has local units, since $A$ has.

\begin{defi}
Let $\G$ be an ordered groupoid and $\alpha = (A_g,\alpha_g)_{g \in \G}$ a P.O. action of $\G$ on a ring $A$ such that $A \ltimes_\alpha \G$ is associative. Define the ideal $N$ of $A \ltimes_\alpha \G$ as $N = \langle a\delta_g - a\delta_h : g \leq h, a \in A_g \subseteq A_h \rangle$. We define the \emph{partial skew ordered groupoid ring} $A \ltimes_\alpha^o \G$ as $A \ltimes_\alpha^o \G = \frac{A \ltimes_\alpha \G}{N}$.
\end{defi}

\section{Globalization}

Let $\beta = (B_g,\beta_g)_{g \in \G}$ be an ordered action of an ordered groupoid $\G$ on a ring $B$. Let $A$ be an ideal of $B$. We shall construct a P.O. action $\alpha$ of $\G$ on $A$ by restricting $\beta$. Consider, for each $e \in \G_0$, $A_e := A \cap B_e$, which is an ideal of $A$ and $B$. Now, for all $g \in \G$, define $A_g := A_{r(g)} \cap \beta_g(A_{d(g)})$, which is an ideal of $A_{r(g)}$, and also define $\alpha_g = \beta_g|_{A_{g^{-1}}}$, which is a ring isomorphism.

Observe that if $e \leq f$ are elements of $\G_0$, then $B_e \subseteq B_f$, which implies $A_e = A \cap B_e \subseteq A \cap B_f = A_f$. Now, if $g \leq h$, then
\begin{align*}
    A_g & = A_{r(g)} \cap \beta_g(A_{d(g)}) \subseteq A_{r(h)} \cap \beta_g(A_{d(g)}) \\
    & = A_{r(h)} \cap \beta_h(A_{d(g)}) \subseteq A_{r(h)} \cap \beta_h(A_{d(h)}) = A_h,
\end{align*}
and given arbitrary $a \in A_{g^{-1}}$, $\alpha_h(a) = \beta_h(a) = \beta_g(a) = \alpha_g(a)$, so $\alpha_g = \alpha_h|_{A_{g^{-1}}}$. Hence $\alpha = (A_g,\alpha_g)_{g \in \G}$ is a P.O. action of $\G$ on $A$. We say that $\alpha$ is the \emph{standard restriction} of $\beta$ to the ideal $A$.

\begin{exe} \label{exe1glob}
Let $\G = \{s,s^{-1},r(s),d(s),e\}$ with $\G_0 = \{ r(s),d(s),e\}$, $e \leq s,$ $ and \leq s^{-1}$, $e \leq r(s)$, $e \leq d(s)$.

Let $R$ be a commutative ring and $B = Re_1 \oplus Re_2 \oplus Re_3$, where $\{e_1, e_2, e_3\}$ is a set of pairwise orthogonal idempotents whose sum is $1_B$. Define
\begin{align*}
    B_{d(s)} = B_{s^{-1}} = Re_1 \oplus Re_2, \quad B_{r(s)} = B_s = Re_2 \oplus Re_3, \quad B_e = Re_2
\end{align*}
and $\beta_g = Id_{B_g}$, for all $g \in \G_0$, $\beta_s(ae_1 + be_2) = be_2 + ae_3$, and $\beta_{s^{-1}} = \beta_s^{-1}$.

Let $A = Re_2 \oplus Re_3$. The standard restriction $\alpha = (A_g,\alpha_g)_{g \in \G}$ of $\beta$ to $A$ is given by $A_{r(s)} = Re_2 \oplus Re_3$, $A_{d(s)} = Re_2 = A_e$, $A_s = Re_2 = A_{s^{-1}}$, and $\alpha_g = \beta_g|_{A_{g^{-1}}} = Id_{A_{g^{-1}}}, \text{ for all } g \in \G$.
\end{exe}

\begin{obs}
    Every P.O. action $\alpha = (A_g,\alpha_g)_{g \in \G}$ of $\G$ on a ring $A$ obtained by a standard restriction of an ordered action $\beta = (B_g,\beta_g)_{g \in \G}$ of $\G$ on a ring $B$, where $A$ is an ideal of $B$, satisfies the following condition: if $e \leq r(g)$, then $A_{(e|g)} = A_e \cap A_g$. In fact,
\begin{align*}
    A_e \cap A_g & = A_e \cap A_{r(g)} \cap \beta_g(A_{d(g)}) = A_e \cap \beta_g(A_{d(g)}) \\
    & = \beta_g(\beta_{g^{-1}}(A_e)) \cap \beta_g(A_{d(g)}) = \beta_g(A_{d(g)} \cap \beta_{g^{-1}}(A_e)) \\
    & = \beta_g(A_{d(g)} \cap \beta_{(g^{-1}|e)}(A_e)) = \beta_g(A_{d(g)} \cap B_{r(g^{-1}|e)} \cap \beta_{(g^{-1}|e)}(A_e)) \\
    & = \beta_g(A_{d(g)} \cap B_{d(e|g)} \cap \beta_{(g^{-1}|e)}(A_e)) = \beta_g(A \cap B_{d(g)} \cap B_{d(e|g)} \cap \beta_{(g^{-1}|e)}(A_e)) \\
    & = \beta_g(A \cap B_{d(e|g)} \cap \beta_{(g^{-1}|e)}(A_e)) = \beta_g(A_{d(e|g)} \cap \beta_{(g^{-1}|e)}(A_e)) \\
    & = \beta_{(e|g)}(A_{d(e|g)} \cap \beta_{(g^{-1}|e)}(A_e)) = A_e \cap \beta_{(e|g)}(A_{d(e|g)}) = A_{(e|g)}.
\end{align*}

This fact inspires our next definition.
\end{obs}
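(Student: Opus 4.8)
The plan is to establish the identity $A_{(e|g)} = A_e \cap A_g$ by proving the two inclusions separately, relying on the defining formula $A_h = A_{r(h)} \cap \beta_h(A_{d(h)})$ of the standard restriction and on the order-compatibility (PO) of the global action $\beta$.

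Before touching the ideals, I would record three facts from the ordered-groupoid calculus. First, order descends to domains and ranges: if $x \leq y$ then (OG1) gives $x^{-1} \leq y^{-1}$, and (OG2) applied to $x^{-1} \leq y^{-1}$, $x \leq y$ yields $x^{-1}x \leq y^{-1}y$, i.e. $d(x) \leq d(y)$, and likewise $r(x) \leq r(y)$; applied to $(e|g) \leq g$ this gives $d(e|g) \leq d(g)$. Second, $(e|g)^{-1} = (g^{-1}|e)$: indeed $(e|g)^{-1} \leq g^{-1}$ with $d\big((e|g)^{-1}\big) = r(e|g) = e \leq d(g^{-1})$, so uniqueness in (OG3) forces the identification, whence $r(g^{-1}|e) = d(e|g)$. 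Third, since $e \leq r(g)$ we have $B_e \subseteq B_{r(g)}$ and hence $A_e \subseteq A_{r(g)}$, so the formula collapses to $A_e \cap A_g = A_e \cap \beta_g(A_{d(g)})$ and, using (PO) for $(e|g) \leq g$ (so $\beta_{(e|g)} = \beta_g|_{B_{d(e|g)}}$ by globalness), to $A_{(e|g)} = A_e \cap \beta_g(A_{d(e|g)})$. The whole statement thus reduces to $A_e \cap \beta_g(A_{d(e|g)}) = A_e \cap \beta_g(A_{d(g)})$.

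The inclusion $\subseteq$ is then immediate: from $d(e|g) \leq d(g)$ one gets $A_{d(e|g)} = A \cap B_{d(e|g)} \subseteq A \cap B_{d(g)} = A_{d(g)}$, and applying $\beta_g$ finishes it. The reverse inclusion is the crux and the step I expect to give the most trouble. Given $a \in A_e$ with $a = \beta_g(b)$, $b \in A_{d(g)}$, I must show that $b$ already lies in $A_{d(e|g)}$. The lever is that $a \in B_e$ and, by globalness together with (PO) for $(g^{-1}|e) \leq g^{-1}$, the map $\beta_{g^{-1}}$ restricts on $B_e$ to the isomorphism $\beta_{(g^{-1}|e)} : B_e \to B_{r(g^{-1}|e)} = B_{d(e|g)}$; hence $b = \beta_{g^{-1}}(a) \in B_{d(e|g)}$. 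Since $b \in A$ and $A$ is an ideal of $B$, we get $b \in A \cap B_{d(e|g)} = A_{d(e|g)}$, so $a = \beta_g(b) \in \beta_g(A_{d(e|g)})$.

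The only genuine difficulty is the ordered-groupoid bookkeeping — pinning down $(e|g)^{-1}$, its domain and range, and invoking (PO) for the \emph{global} action in the correct direction; once that dictionary is in place, each inclusion is a one-line manipulation of ideals together with the ideal property of $A$. This two-inclusion route is cleaner than threading a single chain of equalities, though the latter is of course equally valid.
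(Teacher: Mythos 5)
Your argument is correct and relies on exactly the same ingredients as the paper's own verification: the identification $(e|g)^{-1}=(g^{-1}|e)$, the reduction of $A_{(e|g)}$ and $A_e\cap A_g$ to $A_e\cap\beta_g(A_{d(e|g)})$ and $A_e\cap\beta_g(A_{d(g)})$ via (PO) and globalness, and the key observation that $\beta_{g^{-1}}$ carries $B_e$ into $B_{d(e|g)}$ — the paper merely packages this as one chain of set equalities instead of two inclusions. (One cosmetic point: in the reverse inclusion the ideal property of $A$ is not needed, since $b\in A_{d(g)}\subseteq A$ already gives $b\in A\cap B_{d(e|g)}=A_{d(e|g)}$.)
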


\begin{defi} \label{defacaoparcforte}
Let $\alpha = (A_g,\alpha_g)_{g \in \G}$ be an P.O. action of $\G$ on a ring $A$. We say that $\alpha$ is a \emph{strong} P.O. action if $A_{(e|g)} = A_e \cap A_g$, for all $g \in \G$, $e \leq r(g)$.
\end{defi}

Now we present a new kind of restriction of $\beta$, which generalizes the standard one. Let $\beta = (B_g,\beta_g)_{g \in \G}$ be an ordered global action of $\G$ on a ring $B$.  Consider $A_e \subseteq B_e$ an ideal of $B$, for all $e \in \G_0$, such that $A_e \subseteq A_f$ if $e \leq f$ in $\G_0$. Then $A_g = A_{r(g)} \cap \beta_g(A_{d(g)})$ is an ideal of $A_{r(g)}$, for all $g \in \G$. Define $\alpha_g = \beta_g|_{A_{g^{-1}}}$, for all $g \in \G$. Consider $A = \sum_{e \in \G_0} A_e$, which is an ideal of $B$. Hence $\alpha = (A_g,\alpha_g)_{g \in \G}$ is a P.O. action of $\G$ on $A$. We say that $\alpha$ is a \emph{restriction} of $\beta$ to the ring $A$.

Every standard restriction of an ordered global action is a restriction. The next example illustrates that there are restrictions that are not standard.

\begin{exe} \label{exe2glob}
Let $\G = \{m_0,m_1,n_0,n_1\}$, where $\G_0 = \{m_0,n_0\}$, $m_1^2 = m_0$, $n_1^2 = n_0$, $m_0 \leq n_0$ and $m_1 \leq n_1$. Let $R$ be a commutative ring and $A = Re_1 \oplus Re_2 \oplus Re_3 \oplus Re_4$, where the $e_i$' s are pairwise orthogonal central idempotents whose sum is $1_A$. Define
\begin{align*}
    A_{m_0} = A_{n_1} = A_{n_0} = A, \qquad A_{m_1} = Re_1 \oplus Re_3,
\end{align*}
and
\begin{align*}
    \alpha_{n_1}(ae_1 + be_2 + ce_3 + de_4) & = ce_1 + de_2 + ae_3 + be_4, \\
    \alpha_{m_1}(ae_1 + ce_3) & = ce_1 + ae_3, \\
    \alpha_{e} = I_{A_e}, & \text{ for all } e \in \G_0,
\end{align*}
for all $a,b,c,d \in R$.

Then $\alpha = (A_g,\alpha_g)_{g \in \G}$ is a unital P.O. action of $\G$ on $A$. In Theorem \ref{teoglob} we will see that $\alpha$ is a restriction of some ordered action $\beta$ of $\G$ on some ring $B$. However, $\alpha$ is not a standard restriction, since $A_{(m_0|n_1)} = A_{m_1} \neq A_{n_1} = A_{m_0} \cap A_{n_1}$. Hence there are restrictions that are not standard.
\end{exe}

It is always possible to obtain many P.O. actions from ordered global actions using restrictions. The question we want to answer is that when we can, starting from a P.O. action $\alpha$, give rise to a global ordered action $\beta$ such that a restriction of $\beta$ is equivalent to $\alpha$. We call such an ordered global action of \emph{ordered globalization}.


\begin{defi}\label{globalnotunique}
Let $\alpha = (A_g,\alpha_g)_{g \in \G}$ be a P.O. action of an ordered groupoid $\G$ on a ring $A$. An ordered global action $\beta = (B_g,\beta_g)_{g \in \G}$ of $\G$ on a ring $B$ is said to be a \emph{globalization} of $\alpha$ if for all $e \in \G_0$ there is an injective homomorphism of rings $\varphi_e : A_e \to B_e$ such that
\begin{enumerate}
    \item[(i)] $\varphi_e(A_e)$ is an ideal of $B_e$;
    
    \item[(ii)] $\varphi_{r(g)}(A_g) = \varphi_{r(g)}(A_{r(g)}) \cap \beta_g(\varphi_{d(g)}(A_{d(g)}))$, for all $g \in \G$;
    
    \item[(iii)] $\beta_g \circ \varphi_{d(g)}(a) = \varphi_{r(g)}  \circ \alpha_g(a)$, for all $a \in A_{g^{-1}}$ and $g \in \G$;
    
    \item[(iv)] $B_g = \sum_{r(h) \leq r(g)} \beta_h\left ( \varphi_{d(h)}(A_{d(h)}) \right)$.
\end{enumerate}
\end{defi}


Next, we shall prove that a preunital P.O. action has an ordered globalization if and only if it is unital.

\begin{theorem} \label{teoglob}
Let $\alpha = (A_g,\alpha_g)_{g \in \G}$ be a preunital P.O. action of an ordered groupoid $\G$ on a ring $A$. Then $\alpha$ has a globalization $\beta$ if and only if $\alpha$ is unital.
\end{theorem}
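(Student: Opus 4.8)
The plan is to treat the two implications separately, the forward one being short and the converse requiring an explicit construction. I begin with the elementary observation that a two-sided ideal $I$ of a ring $R$ which happens to be unital, say with identity $u$, must have $u$ a central idempotent of $R$: for $r\in R$ both $ur,ru\in I$, so $uru=(ur)u=ur$ and $uru=u(ru)=ru$, giving $ur=ru$, while $u^2=u$ is immediate. Since $\alpha$ is preunital and $A_e\triangleleft A$, each $1_e$ is therefore a central idempotent of $A$. For the direction ``globalization $\Rightarrow$ unital'', suppose $\beta=(B_g,\beta_g)_{g\in\G}$ together with $\{\varphi_e\}$ is a globalization and fix $g\in\G$. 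By (i) the image $\varphi_{r(g)}(A_{r(g)})$ is an ideal of $B_{r(g)}$ isomorphic to the unital ring $A_{r(g)}$, so its identity $u:=\varphi_{r(g)}(1_{r(g)})$ is a central idempotent of $B_{r(g)}$; likewise $\varphi_{d(g)}(A_{d(g)})$ is a unital ideal of $B_{d(g)}$, and since $\beta_g\colon B_{d(g)}\to B_{r(g)}$ is a ring isomorphism, $v:=\beta_g(\varphi_{d(g)}(1_{d(g)}))$ is a central idempotent of $B_{r(g)}$ generating $\beta_g(\varphi_{d(g)}(A_{d(g)}))$. As $uB_{r(g)}\cap vB_{r(g)}=uv\,B_{r(g)}$ for central idempotents, condition (ii) yields $\varphi_{r(g)}(A_g)=uv\,B_{r(g)}$, which is unital with central identity $uv$. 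Pulling back through the injective map $\varphi_{r(g)}$ produces an identity $1_g$ for $A_g$; then $A_g\triangleleft A_{r(g)}$ makes $1_g$ central in $A_{r(g)}$, and combined with the centrality of $1_{r(g)}$ in $A$ this gives that $1_g$ is a central idempotent of $A$ with $A_g=1_gA$. Hence $\alpha$ is unital.

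For the converse I assume $\alpha$ unital and construct $\beta$ explicitly, imitating the enveloping-action construction for partial group actions but indexed by objects. For $x\in\G_0$ put $\Sigma_x=\{h\in\G:r(h)\le x\}$ and let $B_x$ be the subring of $\prod_{h\in\Sigma_x}A_{d(h)}$ (pointwise operations) generated by the translates described below, cut down to those $f$ that are coherent under the restriction maps $h\mapsto(h|e)$ of (OG3), the coherence being expressed through the idempotents $1_e$. Since $e\le f$ gives $\Sigma_e\subseteq\Sigma_f$, extension by zero realizes $B_e\subseteq B_f$, so the object rings are ordered; I set $B=\sum_{x\in\G_0}B_x$ and $B_g:=B_{r(g)}$. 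The groupoid acts by translation, $\beta_k\colon B_{d(k)}\to B_{r(k)}$, $(\beta_k f)(h)=f(k^{-1}h)$ where $k^{-1}h$ is defined and $0$ elsewhere, which is a ring isomorphism; and each object ring embeds by $\varphi_x\colon A_x\to B_x$, $\varphi_x(a)(h)=\alpha_{h^{-1}}(a\,1_h)$. Here unitality is exactly what makes $\varphi_x$ well defined, because $a\,1_h=1_ha\in 1_hA=A_h$, so $\varphi_x(a)(h)\in A_{d(h)}$; evaluating at $h=x$ gives $\varphi_x(a)(x)=a$, so $\varphi_x$ is injective, and one reads off $B_x=\sum_{r(h)\le x}\beta_h(\varphi_{d(h)}(A_{d(h)}))$, which is condition (iv).

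It then remains to check that $\varphi_x$ is a ring homomorphism, that $\beta_k\circ\varphi_{d(k)}=\varphi_{r(k)}\circ\alpha_k$ on $A_{k^{-1}}$ (condition (iii), a direct computation from (P3) and $\alpha_k^{-1}=\alpha_{k^{-1}}$), and that $\varphi_{r(g)}(A_g)=\varphi_{r(g)}(A_{r(g)})\cap\beta_g(\varphi_{d(g)}(A_{d(g)}))$ (condition (ii)). I expect the main obstacle to be condition (i), namely that $\varphi_x(A_x)$ is an ideal of $B_x$, together with the two compatibilities of $\beta$ with the groupoid multiplication (the analogue of (P3)) and with the order (the identity $\beta_g=\beta_h|_{B_{g^{-1}}}$ for $g\le h$). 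All of these hinge on the same mechanism: the central idempotents $1_h$ supplied by unitality control the supports of the translated functions, so that the pointwise product of a translate $\beta_h(\varphi_{d(h)}(b))$ with an element of $\varphi_x(A_x)$ collapses back into $\varphi_x(A_x)$. This collapse is precisely what fails when the action is only preunital but not unital, which, in the light of the first implication, is exactly why unitality is both necessary and sufficient; the remaining delicate point is calibrating the coherence condition on the functions in $B_x$ so that translation descends to a genuinely \emph{ordered} global action rather than only a global one.
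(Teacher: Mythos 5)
Your forward implication is correct and is an expanded (and welcome) version of the paper's one-line argument: the images $\varphi_{r(g)}(A_{r(g)})$ and $\beta_g(\varphi_{d(g)}(A_{d(g)}))$ are unital ideals of $B_{r(g)}$ generated by central idempotents $u$ and $v$, so condition (ii) gives $\varphi_{r(g)}(A_g)=uvB_{r(g)}$, and pulling $uv$ back through the injective $\varphi_{r(g)}$ yields the required central identity $1_g$. The converse follows the same enveloping-action strategy as the paper (functions on $\G$ with values in $A$, a translation action, the embedding $a\mapsto\alpha_{h^{-1}}(a1_h)$, and $B_g$ generated by the translates), but the formula you give for the translation is where the argument breaks. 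You set $(\beta_k f)(h)=f(k^{-1}h)$ ``where $k^{-1}h$ is defined and $0$ elsewhere.'' In a groupoid the product $k^{-1}h$ exists only when $r(h)=d(k^{-1})=r(k)$, so for every $h\in\Sigma_{r(k)}$ with $r(h)$ strictly below $r(k)$ your $\beta_k$ returns $0$. Hence $\beta_k$ annihilates all components supported at levels strictly below $r(k)$, $\beta_{k^{-1}}\circ\beta_k\neq\mathrm{id}$, and $\beta_k$ cannot be an isomorphism onto $B_{r(k)}$, which by your own condition (iv) contains elements supported at those lower levels. The fix --- and the point where the ordered structure genuinely enters --- is the corestriction of axiom (OG3): one must take $(\beta_k f)(h)=f\bigl((k^{-1}|r(h))h\bigr)$ for $r(h)\leq r(k)$, which is exactly what the paper does. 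Your unspecified ``coherence condition'' cannot substitute for this, because an element $h$ with $r(h)<x$ need not be the restriction of any element with range $x$ (ordered groupoids allow restricting down, not extending up), so the lower components are not determined by the top ones.

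A second, related mismatch: you define $\varphi_x(a)(h)=\alpha_{h^{-1}}(a1_h)$ for \emph{all} $h$ with $r(h)\leq x$, while the paper supports $\varphi_e(a)$ only on $\{h: r(h)=e\}$. With your two conventions combined, condition (iii) already fails at any $h$ with $r(h)<r(g)$: the left side $\beta_g(\varphi_{d(g)}(a))(h)$ is $0$ by your translation formula, while the right side $\varphi_{r(g)}(\alpha_g(a))(h)=\alpha_{h^{-1}}(\alpha_g(a)1_h)$ need not vanish. So the support convention for $\varphi$ and the definition of $\beta$ must be calibrated together; as written they are inconsistent. The first implication stands, and the overall blueprint for the converse is the right one, but the construction needs the corestricted translation and the matching supports before the verifications of (i)--(iv) --- which you correctly identify as the remaining work, and which occupy most of the paper's proof --- can go through.
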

\begin{proof}
($\Rightarrow$): Once $\varphi_{r(g)}(A_g) = \varphi_{r(g)}(A_{r(g)}) \cap \beta_g(\varphi_{d(g)}(A_{d(g)}))$ and $A_{r(g)}$ and $A_{d(g)}$ are unital, the assertion clearly holds.

($\Leftarrow$): Assume that every $A_g$ is unital with identity $1_g$. Let $\mathcal{F} := \mathcal{F}(\G,A)$ be the ring of all maps $\G \to A$ and define $\G_g = \{ h \in \G : r(h) \leq r(g) \}$.

Take $F_g = \{ f \in \mathcal{F} : f(h) = 0, \text{ for all } h \notin \G_g \}$. Since $\G_g = \G_{r(g)}$, it follows that $F_g = F_{r(g)}$. If $h \leq g$, then $r(h) \leq r(g)$, so $F_h \subseteq F_g$. Clearly $F_g$ is an ideal of $\mathcal{F}$, for all $g \in \G$.

Now fix the notation $f(h) := f|_h$, for all $f \in \mathcal{F}$ and $h \in \G$.

For $g \in \G$ and $f \in F_{g^{-1}}$, define $\gamma_g : F_{g^{-1}} \to F_g$ by
\begin{align*}
    \gamma_g(f)|_h = \begin{cases}
        f((g^{-1}|r(h))h), \text{ if } h \in \G_g, \\
        0, \text{ otherwise.}
    \end{cases}
\end{align*}

Notice that if $h \in \G_g$, then $r(h) \leq r(g) = d(g^{-1})$, and hence the restriction $(g^{-1}|r(h))$ is defined, as well as the product $(g^{-1}|r(h))h$. Moreover, $r((g^{-1}|r(h))h) = r(g^{-1}|r(h)) \leq r(g^{-1})$, from where it follows that $\gamma_g$ is well defined. It is easy to see that $\gamma_g$ is a ring homomorphism.

For $g \in \G$, $f \in F_{g^{-1}}$ and $h \in \G_{g^{-1}}$, we have that
\begin{align*}
    \gamma_{g^{-1}} \circ \gamma_g(f)|_h & = \gamma_g(f)|_{(g|r(h))h} = f((g^{-1}|r(g|r(h)))(g|r(h))h) \\
    & \stackrel{(*)}{=} f((g^{-1}g|r(h))h) = f((d(g)|r(h))h) \stackrel{(**)}{=} f(r(h)h) = f(h),
\end{align*}
where the equality ($*$) follows by \cite[Proposition 4.1.3]{lawson1998inverse} and the equality ($**$) follows since $r(h) \leq d(g)$. Similarly $\gamma_g \circ \gamma_{g^{-1}}(f)|_h = f(h)$. Thus we have that $\gamma_g$ is a ring isomorphism.

Observe that $\gamma_e = I_{F_e}$, for all $e \in \G_0$ and
\begin{align*}
    \gamma_{gh}(f)|_k & = f((h^{-1}g^{-1}|r(k))k) \stackrel{(*)}{=} f((h^{-1}|r(g^{-1}|r(k)))(g^{-1}|r(k))k) = \gamma_g \circ \gamma_h(f)|_k,
\end{align*}
for all $f \in F_{h^{-1}}$, $k \in \G_g$ and $(g,h) \in \G_2$, where the equality ($*$) follows again by \cite[Proposition 4.1.3]{lawson1998inverse}. Then $\gamma = (F_g, \gamma_g)_{g \in \G}$ is a global action of $\G$ on $\mathcal{F}$.

To see that $\gamma$ is ordered, observe that if $h \leq g$ in $\G$ and $f \in F_{h^{-1}}$, then
\begin{align*}
    \gamma_g(f)|_k & = f((g^{-1}|r(k))k) \stackrel{(*)}{=} f((h^{-1}|r(k))k) = \gamma_h(f)|_k,
\end{align*}
that is, $\gamma_h = \gamma_g|_{F_{h^{-1}}}$. The equality $(*)$ follows from the uniqueness of restrictions, because $(h^{-1}|r(k)) \leq h^{-1} \leq g^{-1}$ and $d(h^{-1}|r(k)) = r(k)$. Hence $\gamma$ is an ordered global action of $\G$ on $\mathcal{F}$.

Now, for all $e \in \G_0$, define $\varphi_e : A_e \to F_e$ by
\begin{align*}
    \varphi_e(a)|_h = \begin{cases}
        \alpha_{h^{-1}}(a1_h), \text{ if } r(h) = e, \\
        0, \text{ otherwise,}
    \end{cases}
\end{align*}
for all $a \in A_{e}, h \in \G$. We have that $\varphi_e(a)|_e = \alpha_e(a1_e) = a$, thus $\varphi_e$ is a monomorphism, for all $e \in \G_0$. 

Let $B_g$ be the subring generated by $\bigcup_{r(h) \leq r(g)} \gamma_h(\varphi_{d(h)}(A_{d(h)}))$, for all $g \in \G$. Notice that $B_g \subseteq F_g$. Consider $B = \sum_{e \in \G_0} B_e$. Define $\beta_g := \gamma_g|_{B_{g^{-1}}}$, for all $g \in \G$. Then $\beta = (B_g,\beta_g)_{g \in \G}$ is an ordered global action of $\G$ on $B$. Observe that $\beta$ is not a unital action in general. The next step is to show that $\beta$ is a globalization of $\alpha$.

We will start by verifying (iii). Let $g \in \G$, $a \in A_{g^{-1}}$ and $h \in \G$. If $r(h) = r(g)$, then
\begin{align*}
    \varphi_{r(g)}(\alpha_g(a))|_h = \alpha_{h^{-1}}(\alpha_g(a)1_h)
\end{align*}
and
\begin{align*}
    \beta_g(\varphi_{d(g)}(a))|_h = \varphi_{d(g)}(a)|_{g^{-1}h} = \alpha_{h^{-1}g}(a1_{g^{-1}h}).
\end{align*}

Since $a \in A_{g^{-1}}$, $a1_{g^{-1}h} \in A_{g^{-1}} \cap A_{g^{-1}h}$. Thus
\begin{align*}
    \beta_g(\varphi_{d(g)}(a)) & = \alpha_{h^{-1}g}(a1_{g^{-1}h}) = \alpha_{h^{-1}}(\alpha_g(a1_{g^{-1}h}1_{g^{-1}})) \\
    & = \alpha_{h^{-1}}(\alpha_g(a1_{g^{-1}})\alpha_g(1_{g^{-1}h}1_{g^{-1}})) = \alpha_{h^{-1}}(\alpha_g(a1_{g^{-1}})1_g1_h) \\
    & = \alpha_{h^{-1}}(\alpha_g(a1_{g^{-1}})1_h),
\end{align*}
because $\alpha_g(A_{g^{-1}} \cap A_{g^{-1}h}) = A_g \cap A_h$ implies $\alpha_g(1_{g^{-1}h}1_{g^{-1}}) = 1_g1_h$.

On the one hand, if $r(g) \neq r(h)$, we have that $\varphi_{r(g)}(\alpha_g(a))|_h = 0$. On the other hand, if $r(h) < r(g)$,
\begin{align*}
    \beta_g(\varphi_{d(g)}(a))|_h = \varphi_{d(g)}(a)|_{(g^{-1}|r(h))h} = \begin{cases}
        \alpha_{h^{-1}(r(h)|g)}(a1_{(g^{-1}|r(h))h}), \text{ if } r(g^{-1}|r(h)) = d(g), \\
        0, \text{ otherwise.}
    \end{cases}
\end{align*}

Notice that if $r(g^{-1}|r(h)) = d(g)$, then since $g^{-1} \leq g^{-1}$ and $r(g^{-1}) = d(g)$, from the uniqueness of corestrictions, we have that $g^{-1} = (d(g)|g^{-1}) = (d(g)|(g^{-1}|r(h))) = (g^{-1}|r(h))$. But this implies that $r(g) = d(g^{-1}) = d(g^{-1}|r(h)) = r(h)$, which is a contradiction. Therefore $d(g^{-1}|r(h)) < d(g^{-1})$. Thus $\beta_g(\varphi_{d(g)}(a))|_h = 0$. If $r(g) < r(h)$ or $r(g)$ and $r(h)$ are incomparable, this equality  also holds, showing (iii).

Now we proceed to (ii). Let $g \in \G$ and $c \in \varphi_{r(g)}(A_{r(g)}) \cap \beta_g(\varphi_{d(g)}(A_{d(g)}))$. Then there are $a \in A_{r(g)}$, $b \in A_{d(g)}$ such that $c = \varphi_{r(g)}(a) = \beta_g(\varphi_{d(g)}(b))$. Therefore,
\begin{align*}
    a = \alpha_{r(g)^{-1}}(a1_{r(g)}) = \varphi_{r(g)}(a)|_{r(g)} = \beta_g(\varphi_{d(g)}(b))|_{r(g)} = \varphi_{d(g)}(b)|_{g^{-1}} = \alpha_g(b1_{g^{-1}}) \in A_g,
\end{align*}
from where $c = \varphi_{r(g)}(a) \in \varphi_{r(g)}(A_g)$.

Conversely, if $c \in \varphi_{r(g)}(A_g)$, then $c = \varphi_{r(g)}(a)$, for some $a \in A_g$. Taking $b = \alpha_{g^{-1}}(a) \in A_{g^{-1}}$, we have by (iii) that $\beta_g(\varphi_{d(g)}(b)) = \varphi_{r(g)}(\alpha_g(b)) = \varphi_{r(g)}(a)$. Hence $c \in \varphi_{r(g)}(A_{g}) \cap \beta_g(\varphi_{d(g)}(A_{g^{-1}})) \subseteq \varphi_{r(g)}(A_{r(g)}) \cap \beta_g(\varphi_{d(g)}(A_{d(g)}))$.

It only remains for us to show that $\varphi_e(A_e)$ is an ideal of $B_e$, for all $e \in \G_0$. For this, it is sufficient to show that $\varphi_e(b)\beta_h(\varphi_{d(h)}(a)), \beta_h(\varphi_{d(h)}(a))\varphi_e(b) \in \varphi_e(A_e)$, for all $h \in \G_e, a \in A_{d(h)}$ and $b \in A_e$. Given $k \in \G$, we have two possibilities:

\noindent \textbf{Case (1):} $r(k) \leq r(h) \leq e$.

Observe that
\begin{align*}
    \beta_h(\varphi_{d(h)}(a))\varphi_e(b)|_k = \beta_h(\varphi_{d(h)}(a))|_k\varphi_e(b)|_k = \varphi_{d(h)}(a)|_{(h^{-1}|r(k))k}\varphi_e(b)|_k.
\end{align*}

We have that
\begin{align*}
    \varphi_e(b)|_k = \begin{cases}
        \alpha_{k^{-1}}(b1_k), \text{ if } r(k) = e, \\
        0, \text{ otherwise.}
    \end{cases}
\end{align*}

Since $r(k) = e$ and $r(k) \leq r(h) \leq e$ imply $r(k) = r(h) = e$, we obtain
\begin{align*}
    \beta_h(\varphi_{d(h)}(a))\varphi_e(b)|_k & = \begin{cases}
        \alpha_{k^{-1}h}(a1_{h^{-1}k})\alpha_{k^{-1}}(b1_k), \text{ if } r(k) = r(h) = e, \\
        0, \text{ otherwise}
    \end{cases} \\
    & = \begin{cases}
        \alpha_{k^{-1}h}(a1_{h^{-1}k})1_{k^{-1}h}1_{k^{-1}}\alpha_{k^{-1}}(b1_k), \text{ if } r(k) = r(h) = e, \\
        0, \text{ otherwise}
    \end{cases} \\
    & \stackrel{(*)}{=} \begin{cases}
        \alpha_{k^{-1}h}(a1_{h^{-1}k})\alpha_{k^{-1}h}(1_{h^{-1}k}1_{h^{-1}})\alpha_{k^{-1}}(b1_k), \text{ if } r(k) = r(h) = e, \\
        0, \text{ otherwise}
    \end{cases} \\
    & = \begin{cases}
        \alpha_{k^{-1}h}(a1_{h^{-1}}1_{h^{-1}k})\alpha_{k^{-1}}(b1_k), \text{ if } r(k) = r(h) = e, \\
        0, \text{ otherwise}
    \end{cases} \\
    & = \begin{cases}
        \alpha_{k^{-1}}(\alpha_{h}(a1_{h^{-1}})b1_k), \text{ if } r(k) = r(h) = e, \\
        0, \text{ otherwise}
    \end{cases} \\
    & = \varphi_e(\alpha_h(a1_{h^{-1}})b)|_k,
\end{align*}
where $(*)$ holds because Proposition \ref{propriedadesacpargrp}(ii) implies that $\alpha_{k^{-1}h}(A_{h^{-1}k} \cap A_{h^{-1}}) = A_{k^{-1}h} \cap A_{k^{-1}}$.

\noindent \textbf{Case (2):} $r(h) \leq r(k) \leq e$ or $r(k)$ and $r(h)$ incomparable.

In this case $\beta_h(\varphi_{d(h)}(a))\varphi_e(b)|_k = \beta_h(\varphi_{d(h)}(a))|_k\varphi_e(b)|_k = 0 \cdot \varphi_e(b)|_k = 0$ directly from the definition of $\beta$.

The verification that $\varphi_e(b)\beta_h(\varphi_{d(h)}(a)) \in \varphi_e(A_e)$ is similar, and then $\varphi_e(A_e)$ is an ideal of $B_e$. 
\end{proof}

\begin{obs}
    \begin{itemize}
        \item[(i)] The ring $B$ constructed above does not need to have an identity element. Ferrero and Lazzarin have determined in \cite{ferrero2008partial} necessary and sufficient conditions on the partial group action $\alpha$ for the ring $B$ to be unital. In this scenario, they called $\alpha$ as \emph{finite type}. 

        \item[(ii)] Even though the ring $B$ is not unital, it has local units by construction, and hence it is an idempotent ring, that is, $B^2 = B$.
    \end{itemize}
\end{obs}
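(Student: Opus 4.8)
The plan is to handle the two items separately: item (ii) is the substantive assertion and will be proved by exhibiting explicit local units inside $B$, while item (i) is a negative statement whose sharp form is precisely the content of the cited reference. Throughout I would work inside $\mathcal{F} = \mathcal{F}(\G,A)$, where multiplication is pointwise, so that an element $f \in \mathcal{F}$ is central exactly when each value $f(k)$ is central in $A$, and idempotent exactly when each $f(k)$ is idempotent in $A$. Since $B$ is generated as a ring by the elements $\gamma_h(\varphi_{d(h)}(a))$ with $a \in A_{d(h)}$, it suffices to produce, for a finite family of such generators, a single central idempotent of $B$ acting as a two-sided identity on all of them.

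For each $h \in \G$ set $p_h := \gamma_h(\varphi_{d(h)}(1_{d(h)})) \in B$, where $1_{d(h)}$ is the central idempotent identity of $A_{d(h)}$. First I would compute the support of a generator $g = \gamma_h(\varphi_{d(h)}(a))$: by definition $g(k) = \varphi_{d(h)}(a)\big((h^{-1}|r(k))k\big)$ whenever $r(k) \leq r(h)$, and this is nonzero only if $r\big((h^{-1}|r(k))k\big) = d(h)$. The main obstacle is exactly this combinatorial step, which I expect to be the analogue of the restriction identities used in the proof of Theorem \ref{teoglob}: using the uniqueness of (co)restrictions (as in \cite[Proposition 4.1.3]{lawson1998inverse}) one must show that $r\big((h^{-1}|r(k))k\big) = d(h)$ forces $r(k) = r(h)$. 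Granting this, $g$ is supported on $\{k : r(k) = r(h)\}$, where $(h^{-1}|r(k)) = h^{-1}$, so $g(k) = \alpha_{k^{-1}h}(a\,1_{h^{-1}k}) \in A_{k^{-1}h}$. Specializing $a = 1_{d(h)}$ gives $p_h(k) = 1_{k^{-1}h}$ for $r(k) = r(h)$ and $0$ otherwise; hence every value of $p_h$ is a central idempotent of $A$, so $p_h$ is a central idempotent of $B$. Moreover, since $g(k) \in A_{k^{-1}h}$ and $1_{k^{-1}h}$ is the identity of $A_{k^{-1}h}$, we obtain $p_h(k)g(k) = g(k)$ at every $k$, that is, $p_h\,g = g$ for every generator $g$ arising from the same $h$.

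With these idempotents in hand, the local units are assembled by a finite join. Given $x \in B$, write it as a finite sum of finite products of generators, and let $H \subseteq \G$ be the finite set of $h$'s that occur. Since the $p_h$ are commuting central idempotents, the join $\bar{P} := \bigvee_{h \in H} p_h$, namely the inclusion–exclusion polynomial $\sum_{\emptyset \neq S \subseteq H}(-1)^{|S|+1}\prod_{h \in S}p_h$ (which has no constant term and so lies in $B$), is again a central idempotent of $B$ satisfying $\bar{P}\,p_h = p_h$ for each $h \in H$. For a generator $g$ coming from some $h \in H$ this yields $\bar{P}\,g = \bar{P}\,p_h\,g = p_h\,g = g$, and as $\bar{P}$ fixes the left-most factor of any product of such generators, $\bar{P}\,x = x$. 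Taking $E$ to be the set of all such finite joins exhibits $B$ as a ring with local units in the sense of Section~2. Finally, $B^2 = B$ is immediate, since every $x \in B$ satisfies $x = \bar{P}\,x \in B^2$, whence $B \subseteq B^2 \subseteq B$.

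For item (i), the point is simply that these local units cannot in general be collapsed to a single identity element: already when $\G_0$ is infinite the family $\{p_e : e \in \G_0\}$ has no common upper bound inside $B$, in complete parallel with the failure of the partial skew groupoid ring to be unital for infinite $\G_0$ noted in Section~2. A precise necessary and sufficient condition on $\alpha$ (the \emph{finite type} condition) for $B$ to be unital is established, in the group setting, in \cite{ferrero2008partial}, to which item (i) defers; no further argument is required here.
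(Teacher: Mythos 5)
Your argument for item (ii) is correct and is essentially the paper's own (implicit) one: the remark offers no proof beyond ``by construction,'' and your idempotents $p_h = \gamma_h(\varphi_{d(h)}(1_{d(h)}))$, the support computation forcing $r(k) = r(h)$ via uniqueness of corestrictions (verbatim the argument in the proof of Theorem~\ref{teoglob} for the case $r(h) < r(g)$), and the inclusion--exclusion joins are exactly the local units the construction supplies. For item (i) the paper likewise only defers to Ferrero--Lazzarin, so your treatment is adequate; just note that your side claim that for infinite $\G_0$ the family $\{p_e : e \in \G_0\}$ has no common upper bound in $B$ is an unproved heuristic (elements of $B$ need not have finite support, since each $\varphi_e(a)$ is supported on all of $\{h : r(h) = e\}$), but nothing in the remark depends on it.
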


\begin{exe} \label{exsemunicidade}
Let $\G$, $R$, $B$ and $A$ be as in Example \ref{exe1glob}. We will construct a globalization for $\alpha$. Consider $\mathcal{F} = \mathcal{F}(\G,A)$. We have that $\mathcal{F} \simeq A^5$ via $f \in \mathcal{F} \mapsto (f(s),f(s^{-1}),f(r(s)),f(d(s)),f(e))$.

Since $\G_s = \G_{r(s)} = \{ s, r(s), e\}$,  $\G_{s^{-1}} = \G_{d(s)} = \{ s^{-1}, d(s), e\}$ and $\G_e = \{ e \}$, we have that
\begin{align*}
    F_s & = \{ f \in \mathcal{F} : f(g) = 0, \text{ for all } g \notin \G_s \} \simeq A \times 0 \times A \times 0 \times A, \\
    F_{s^{-1}} & = \{ f \in \mathcal{F} : f(g) = 0, \text{ for all } g \notin \G_{s^{-1}} \} \simeq 0 \times A \times 0 \times A \times A, \\
    F_e & = \{ f \in \mathcal{F} : f(g) = 0, \text{ for all } g \notin \G_e \} \simeq 0 \times 0 \times 0 \times 0 \times A.
\end{align*}

Thus, $\gamma_s(0,a,0,b,c) = (b,0,a,0,c)$, $\gamma_{s^{-1}}(a,0,b,0,c) = (0,b,0,a,c)$ and $\gamma_g = Id_{F_g}$, for all $g \in \G_0$. Furthermore, notice that $\varphi_{r(s)}(ae_2 + be_3) = (ae_2,0,ae_2 + be_3, 0, 0)$, $\varphi_{d(s)}(ae_2) = (0,ae_2,0,ae_2, 0)$ and $\varphi_e(ae_2) = (0,0,0,0,ae_2)$.

Given $ae_2 + be_3 \in A_{r(s)}$, $ce_2 \in A_{d(s)}$, $de_2 \in A_e$, we obtain
\begin{align*}
    & \gamma_{r(s)}(\varphi_{r(s)}(ae_2 + be_3)) + \gamma_s(\varphi_{d(s)}(ce_2)) + \gamma_e(\varphi_e(de_2)) \\
    = \; & \gamma_{r(s)}(ae_2,0,ae_2+be_3,0,0) + \gamma_s(0,ce_2,0,ce_2,0) + \gamma_e(0,0,0,0,de_2) \\
    = \; & (ae_2,0,ae_2+be_3,0,0) + (ce_2,0,ce_2,0,0) + (0,0,0,0,de_2) \\
    = \; & ((a+c)e_2,0,(a+c)e_2+be_3,0,de_2),
\end{align*}
from where it follows that $B_s' = \{ (ae_2,0,ae_2 + be_3,0,ce_2) : a,b,c \in R \}$.

Similarly, $B_{s^{-1}}' = \{ (0,ae_2+be_3,0,ae_2,0,ce_2) : a,b,c \in R \}$ and $B_e' = Re_2(0,0,0,0,1)$, thus $B' = B_s' + B_{s^{-1}}' + B_e' = \{(ae_2, be_2+ce_3,ae_2+de_3,be_2,fe_3) : a,b,c,d,f \in R\}$.

Then $\beta' = (B_g',\beta_g')_{g \in \G}$ is a globalization of $\alpha$. Observe that $\beta$ is also a globalization of $\alpha$, taking $\varphi_g = \iota_g : A_g \to B_g$ as the usual inclusion $A_g \mapsto A \cap B_g$, for all $g \in \G_0$. However $\beta' \not\simeq \beta$, since $B_s \simeq R^2 \not\simeq R^3 \simeq B_s'$.
\end{exe}

\section{Uniqueness}

In this section, we provide a sufficient condition for a P.O. action to have a unique globalization. In order to achieve this goal, we introduce the notion of a minimal globalization.

\begin{defi}
    Let $\alpha = (A_g,\alpha_g)_{g \in \G}$ be a P.O. action of $\G$ on a ring $A$ and $\beta = (B_g,\beta_g)_{g \in \G}$ be a globalization of $\alpha$ on a ring $B$. We say that $\beta$ is a \emph{minimal globalization} if 
    \begin{enumerate}
        \item[(iv')] $B_g = \sum_{r(h) = r(g)} \beta_h(\varphi_{d(h)}(A_{d(h)}))$, for all $g \in \G$,
    \end{enumerate}
    holds.
\end{defi}

Further on in Example \ref{exeglobmin} we will see some examples of minimal ordered globalizations in contrast with ordered globalizations. Besides that, we need to study some properties of strong partial actions. 

Given an ordered groupoid $\G$, we define the \emph{pseudoproduct} $*$ as
\begin{align*}
    g * h = \begin{cases}
    (g|d(g) \wedge r(h))(d(g) \wedge r(h)|h), \text{ if } d(g) \wedge r(h) \text{ is defined}, \\
    \text{undefined, otherwise.}
    \end{cases}
\end{align*}

\begin{lemma} \label{condequivacparcialforte}
    Let $\alpha = (A_g,\alpha_g)_{g \in \G}$ be a strong P.O. action of $\G$ on a ring $A$. Then it holds
    \begin{enumerate}
        \item[(PS)] $\alpha_g \circ \alpha_h = \alpha_{g*h} \circ \text{Id}_{A_{h^{-1}}}$, for all $g,h \in \G$ such that $g * h$ is defined.
    \end{enumerate}

    Conversely, if $\alpha$ is a P.O. action that satisfies (PS), then $\alpha$ is strong.
    \end{lemma}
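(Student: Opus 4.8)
The plan is to prove both implications by factoring the pseudoproduct $g*h$ through a genuinely composable pair of morphisms obtained by restriction, and then transporting the partial‑action axioms along the order via (PO).

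For the direct implication, fix $g,h\in\G$ with $e:=d(g)\wedge r(h)$ defined and set $p:=(g\,|\,e)$ and $q:=(e\,|\,h)$, so that $g*h=pq$ with $d(p)=e=r(q)$, i.e. $(p,q)\in\G_2$. First I would read off the ideals attached to $p$ and $q$. Since $q=(e\,|\,h)\le h$ with $e\le r(h)$, strongness gives $A_q=A_{(e|h)}=A_e\cap A_h$; and since $(g\,|\,e)^{-1}=(e\,|\,g^{-1})$ (which follows from (OG1) together with the uniqueness in (OG3*)) and $e\le d(g)=r(g^{-1})$, strongness gives $A_{p^{-1}}=A_{(e|g^{-1})}=A_e\cap A_{g^{-1}}$. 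Hence $A_{p^{-1}}\cap A_q=A_e\cap A_{g^{-1}}\cap A_h$. By (PO), $p\le g$ and $q\le h$ yield $\alpha_p=\alpha_g|_{A_{p^{-1}}}$ and $\alpha_q=\alpha_h|_{A_{q^{-1}}}$, so on $\alpha_q^{-1}(A_{p^{-1}}\cap A_q)$ one may replace $\alpha_q$ by $\alpha_h$, and, since there $\alpha_q(a)\in A_{p^{-1}}\subseteq A_{g^{-1}}$, replace $\alpha_p$ by $\alpha_g$. Applying the associativity axiom (P3) to the composable pair $(p,q)$ then gives, for every $a\in\alpha_q^{-1}(A_{p^{-1}}\cap A_q)$,
\[
\alpha_{g*h}(a)=\alpha_{pq}(a)=\alpha_p(\alpha_q(a))=\alpha_g(\alpha_h(a)),
\]
which is the pointwise content of (PS).

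It then remains to match the domains of $\alpha_g\circ\alpha_h$ and $\alpha_{g*h}\circ\text{Id}_{A_{h^{-1}}}$. Here I would identify $A_{(g*h)^{-1}}=A_{(pq)^{-1}}$ with $\alpha_q^{-1}(A_{p^{-1}}\cap A_q)$: one inclusion is (P2) applied to $(q^{-1},p^{-1})$, while the reverse inclusion is extracted from Proposition \ref{propriedadesacpargrp}(ii) (describing $\alpha_p(A_{p^{-1}}\cap A_q)=A_p\cap A_{pq}$) together with strongness, and finally strongness is used once more to see that this set is exactly the natural domain $\alpha_h^{-1}(A_h\cap A_{g^{-1}})$ of the left‑hand composite cut down to $A_{h^{-1}}$. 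This last domain bookkeeping is the step I expect to be the main obstacle: the domain of $\alpha_g\circ\alpha_h$ is a priori governed by the intersection $A_h\cap A_{g^{-1}}\subseteq A_{r(h)}\cap A_{d(g)}$ of object ideals, and it is precisely strongness that forces these intersections to descend to the meet $e=d(g)\wedge r(h)$, so that the composite is defined exactly where $\alpha_{g*h}$ is.

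For the converse, assume $\alpha$ is a P.O. action satisfying (PS). Given $g\in\G$ and $e\le r(g)$, I would first compute the pseudoproduct of the identity $e\in\G_0$ with $g$: since $d(e)\wedge r(g)=e\wedge r(g)=e$ and the restriction and corestriction of an identity are again identities, $e*g=(e\,|\,e)(e\,|\,g)=e\,(e\,|\,g)=(e\,|\,g)$. Applying (PS) to the pair $(e,g)$ therefore gives $\alpha_e\circ\alpha_g=\alpha_{(e|g)}\circ\text{Id}_{A_{g^{-1}}}$ as partial maps. Because $\alpha_e=\text{Id}_{A_e}$ by (P1), the left‑hand side is the partial map $a\mapsto\alpha_g(a)$ defined on $\{a\in A_{g^{-1}}:\alpha_g(a)\in A_e\}$, whose image is $\alpha_g(A_{g^{-1}})\cap A_e=A_g\cap A_e$; the right‑hand side is $\alpha_{(e|g)}$, whose image is $A_{(e|g)}$ (note $A_{(e|g)^{-1}}\subseteq A_{g^{-1}}$, so composing with $\text{Id}_{A_{g^{-1}}}$ changes nothing). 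Equality of these two partial maps forces equality of their images, so $A_{(e|g)}=A_e\cap A_g$, which is exactly the defining condition of a strong P.O. action.
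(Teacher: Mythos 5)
Your argument is essentially the paper's: in the forward direction both proofs factor $g*h=(g|e)(e|h)$ with $e=d(g)\wedge r(h)$, use strongness to rewrite $A_{g^{-1}}\cap A_h$ as $A_{(e|h)}\cap A_{(g|e)^{-1}}$, invoke (PO) to replace $\alpha_g,\alpha_h$ by the restricted maps $\alpha_{(g|e)},\alpha_{(e|h)}$, and reduce to the composable case of (P3), while the converse via $e*g=(e|g)$ and comparison of images is word-for-word the paper's. The domain bookkeeping you single out as the main obstacle is resolved in the paper exactly along the lines you sketch, namely through the identities $A_{(h^{-1}|e)}=A_{d(e|h)}\cap A_{h^{-1}}$ and $A_{(g*h)^{-1}}\subseteq A_{d(e|h)}$; note only that at this step you, like the paper, are also tacitly using the containment $A_{g^{-1}}\cap A_h\subseteq A_{d(g)}\cap A_{r(h)}\subseteq A_e$, which the paper likewise attributes to strongness without further elaboration.
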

\begin{proof}
    Given $(g,h) \in \G_2$, we have that
    \begin{align*}
        \alpha_g \circ \alpha_h & = \alpha_g \circ \alpha_h|_{\alpha_{h^{-1}}(A_{g^{-1}} \cap A_h)} = \alpha_g \circ \alpha_h|_{A_{h^{-1}} \cap A_{(gh)^{-1}}} = \alpha_g \circ \alpha_h \circ \text{Id}_{A_{(gh)^{-1}} \cap A_{h^{-1}}}\\
        & = \alpha_{gh} \circ \text{Id}_{A_{(gh)^{-1}} \cap A_{h^{-1}}} = \alpha_{gh} \circ \text{Id}_{A_{(gh)^{-1}}} \circ \text{Id}_{A_{h^{-1}}}= \alpha_{gh} \circ \text{Id}_{A_{h^{-1}}}.
    \end{align*}

    Now, if $g,h \in \G$ are such that the pseudoproduct $g * h$ is defined, that is, such that the meet $e := d(g) \wedge r(h)$ is defined, then $\alpha_g \circ \alpha_h = \alpha_g|_{A_h \cap A_{g^{-1}}} \circ \alpha_{h}|_{\alpha_{h^{-1}}(A_h \cap A_{g^{-1}})}$.

    Observe that $A_{g^{-1}} \cap A_{h} \subseteq A_{d(g)} \cap A_{r(h)} = A_e$, since $\alpha$ is strong. Then
    \begin{align*}
        A_h \cap A_{g^{-1}} & = A_h \cap A_{g^{-1}} \cap A_e = A_h \cap A_e \cap A_{g^{-1}} \cap A_e = A_{(e|h)} \cap A_{(e|g^{-1})} = A_{(e|h)} \cap A_{(g|e)^{-1}}.
    \end{align*}

    Since $\alpha_{(g|e)} = \alpha_g|_{A_{(g|e)^{-1}}}$ and $\alpha_{(e|h)} = \alpha_h|_{A_{(e|h)^{-1}}}$ we obtain
    \begin{align*}
        \alpha_g \circ \alpha_h & = \alpha_g|_{A_{(e|h)} \cap A_{(g|e)^{-1}}} \circ \alpha_h|_{\alpha_{(e|h)^{-1}}(A_{(e|h)} \cap A_{(g|e)^{-1}})} 
        \\ & = \alpha_{(g|e)}|_{A_{(e|h)} \cap A_{(g|e)^{-1}}} \circ \alpha_{(e|h)}|_{\alpha_{(e|h)^{-1}}(A_{(e|h)} \cap A_{(g|e)^{-1}})} = \alpha_{(g|e)} \circ \alpha_{(e|h)} \\
        & = \alpha_{(g|e)(e|h)} \circ \text{Id}_{A_{(e|h)^{-1}}} = \alpha_{g * h} \circ \text{Id}_{A_{(e|h)^{-1}}} = \alpha_{g * h} \circ \text{Id}_{A_{(h^{-1}|e)}}.
    \end{align*}
    
    By the uniqueness of corestriction we have that $(h^{-1}|e) = (r(h^{-1}|e)|h^{-1})$. In this way, $A_{(h^{-1}|e)} = A_{(r(h^{-1}|e)|h^{-1})} = A_{r(h^{-1}|e)} \cap A_{h^{-1}}$, since $\alpha$ is strong. Now, $r(h^{-1}|e) = r((e|h)^{-1}) = d(e|h)$. Hnece $A_{(h^{-1}|e)} = A_{d(e|h)} \cap A_{h^{-1}}$. However $d(g*h) = d(e|h)$, from where $A_{(g*h)^{-1}} \subseteq A_{d(e|h)}$. Therefore
    \begin{align*}
        \alpha_g \circ \alpha_h & = \alpha_{g * h} \circ \text{Id}_{A_{d(e|h)}} \circ \text{Id}_{A_{h^{-1}}} = \alpha_{g * h} \circ \text{Id}_{A_{(g*h)^{-1}}} \circ \text{Id}_{A_{d(e|h)}} \circ \text{Id}_{A_{h^{-1}}} \\
        & = \alpha_{g * h} \circ \text{Id}_{A_{(g*h)^{-1}}} \circ \text{Id}_{A_{h^{-1}}} = \alpha_{g * h} \circ \text{Id}_{A_{h^{-1}}},
    \end{align*}
    which is precisely (PS).

    For the converse, it only takes us to notice that if $e \leq r(g)$, then
    \begin{align*}
        \alpha_e \circ \alpha_g & = \alpha_{e * g} \circ \text{Id}_{A_{g^{-1}}} = \alpha_{(e|g)} \circ \text{Id}_{A_{g^{-1}}} = \alpha_{(e|g)}.
    \end{align*}

    Now, $A_{(e|g)} = \text{Im}(\alpha_{(e|g)}) = \text{Im}(\alpha_{e} \circ \alpha_g) = \alpha_e(A_e \cap A_g) = A_e \cap A_g$, showing that $\alpha$ is strong.
\end{proof}

\begin{obs}
    When $\alpha$ is strong, notice that $\text{dom}(\alpha_g \circ \alpha_h) = \text{dom}(\alpha_{g*h} \circ \text{Id}_{A_{h^{-1}}})$, for all $g,h \in \G$ such that $\exists g * h$. But this is the same as saying that $\alpha_g(A_{g^{-1}} \cap A_h) = A_g \cap A_{g*h}$.
\end{obs}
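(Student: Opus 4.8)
The plan is to prove the two implications separately, treating (PS) first. For the direct implication I would proceed in two stages. As a warm-up, consider a genuinely composable pair $(g,h)\in\G_2$ and show $\alpha_g\circ\alpha_h=\alpha_{gh}\circ\mathrm{Id}_{A_{h^{-1}}}$. This requires no strongness: the effective domain of $\alpha_g\circ\alpha_h$ is $\alpha_{h^{-1}}(A_{g^{-1}}\cap A_h)$, which by property (ii) equals $A_{h^{-1}}\cap A_{(gh)^{-1}}$, and by (P2)--(P3) the two maps agree on it; one then absorbs the $A_{(gh)^{-1}}$ factor into $\mathrm{Id}_{A_{h^{-1}}}$ since $A_{(gh)^{-1}}=\mathrm{dom}(\alpha_{gh})$.

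For the general pseudoproduct case, set $e=d(g)\wedge r(h)$ and pass to the restriction $(g|e)$ and corestriction $(e|h)$, which \emph{are} composable with $(g|e)(e|h)=g*h$. The goal is to rewrite $\alpha_g\circ\alpha_h=\alpha_{(g|e)}\circ\alpha_{(e|h)}$ and invoke the composable case. The key is a domain computation: I want to express the effective domain, controlled by $A_{g^{-1}}\cap A_h$, as $A_{(e|h)}\cap A_{(g|e)^{-1}}$. Strongness supplies $A_e\cap A_h=A_{(e|h)}$ and $A_e\cap A_{g^{-1}}=A_{(e|g^{-1})}$, and (OG1) identifies $(e|g^{-1})=(g|e)^{-1}$; then (PO) lets me replace $\alpha_g,\alpha_h$ by their restrictions on these domains.

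The step I expect to be the main obstacle is inserting the factor $A_e$ in the first place, i.e.\ justifying $A_{g^{-1}}\cap A_h\subseteq A_e$, which is precisely the statement that $A_\bullet$ is compatible with the meet $e=d(g)\wedge r(h)$. I would attempt to derive $A_{d(g)}\cap A_{r(h)}=A_e$ from strongness applied to the objects $d(g),r(h)$; this compatibility is the delicate heart of the argument and the place where I would expect to have to argue most carefully. Granting it, the remainder is bookkeeping: applying the composable case gives $\alpha_{g*h}\circ\mathrm{Id}_{A_{(e|h)^{-1}}}$, after which I would use (OG1), the uniqueness of corestrictions (to write $(e|h)^{-1}=(h^{-1}|e)=(d(e|h)\,|\,h^{-1})$), and strongness once more to obtain $A_{(e|h)^{-1}}=A_{d(e|h)}\cap A_{h^{-1}}$; finally, since $d(g*h)=d(e|h)$ forces $A_{(g*h)^{-1}}\subseteq A_{d(e|h)}$, the extra factor $A_{d(e|h)}$ is absorbed and I land on $\alpha_{g*h}\circ\mathrm{Id}_{A_{h^{-1}}}$, which is (PS).

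For the converse I would specialize (PS) by putting an object $e\le r(g)$ in the first slot. Then $d(e)\wedge r(g)=e\wedge r(g)=e$, so the pseudoproduct degenerates to $e*g=(e|e)(e|g)=(e|g)$, and (PS) reads $\alpha_e\circ\alpha_g=\alpha_{(e|g)}\circ\mathrm{Id}_{A_{g^{-1}}}=\alpha_{(e|g)}$, the domain condition $A_{(e|g)^{-1}}\subseteq A_{g^{-1}}$ being automatic from (PO). Comparing images and using $\alpha_e=\mathrm{Id}_{A_e}$ yields $A_{(e|g)}=\mathrm{Im}(\alpha_e\circ\alpha_g)=\alpha_e(A_g\cap A_e)=A_e\cap A_g$, which is exactly strongness. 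I expect this direction to be short and free of obstacles.
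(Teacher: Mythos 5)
Your proposal is, in substance, a reconstruction of the proof of Lemma \ref{condequivacparcialforte} (strong $\Leftrightarrow$ (PS)), and it follows the paper's route for that lemma almost step for step: the composable warm-up via $\alpha_{h^{-1}}(A_{g^{-1}}\cap A_h)=A_{h^{-1}}\cap A_{(gh)^{-1}}$, the reduction of the pseudoproduct case to the composable pair $(g|e)$, $(e|h)$, the absorption of the factor $A_{d(e|h)}$ at the end, and the converse by specializing to $e\le r(g)$. The remark itself is then just the observation that equal maps have equal domains. Two points, however, deserve attention.

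First, the actual content of the remark is the passage from the domain equality to the displayed identity $\alpha_g(A_{g^{-1}}\cap A_h)=A_g\cap A_{g*h}$, and your proposal never touches it. One has $\mathrm{dom}(\alpha_g\circ\alpha_h)=\alpha_{h^{-1}}(A_{g^{-1}}\cap A_h)$ and $\mathrm{dom}(\alpha_{g*h}\circ\mathrm{Id}_{A_{h^{-1}}})=A_{(g*h)^{-1}}\cap A_{h^{-1}}$, so the domain equality is precisely the displayed identity with $(g,h)$ replaced by $(h^{-1},g^{-1})$, using $(g*h)^{-1}=h^{-1}*g^{-1}$ and the fact that $g*h$ exists iff $h^{-1}*g^{-1}$ does. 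This half-line is what the remark is recording and should appear explicitly.

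Second, the step you flag as the ``delicate heart'' --- $A_{d(g)}\cap A_{r(h)}=A_{d(g)\wedge r(h)}$ --- is indeed the crux, and your proposed derivation (``strongness applied to the objects $d(g)$, $r(h)$'') does not close it. For an object $f$ and $m\le f$ one has $(m|f)=m$, so strongness applied at $f$ reduces to $A_m=A_m\cap A_f$, i.e.\ $A_m\subseteq A_f$, which is already guaranteed by (PO); this yields only the inclusion $A_{d(g)\wedge r(h)}\subseteq A_{d(g)}\cap A_{r(h)}$, whereas what the computation needs is the reverse one. (The paper asserts the same equality at the same point ``since $\alpha$ is strong'', and only later records $A_{e\wedge f}=A_e\cap A_f$ as Lemma \ref{lemaacforte}(ii), deduced there from (PS) itself --- so you have put your finger on exactly the spot where the published argument is also thinnest.) To turn your sketch into a proof you would need a genuine argument for $A_{d(g)}\cap A_{r(h)}\subseteq A_{d(g)\wedge r(h)}$, or at least for the weaker inclusion $A_{g^{-1}}\cap A_h\subseteq A_{d(g)\wedge r(h)}$ that is all the subsequent bookkeeping actually uses.
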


In the case where the action is global, we have that strong global ordered actions are precisely global ordered actions. In the case where $\G$ is trivially ordered with the equality, the concepts of strong P.O. action and partial action coincide. 

\begin{lemma} \label{lemaacforte}
Let $\alpha = (A_g, \alpha_g)_{g \in \G}$ be a strong P.O. action of $\G$ on a ring $A$. The following statements are true:
\begin{enumerate}
    \item[(i)] If $e \leq d(g)$, then $A_{(g|e)} = A_g \cap A_{r(g|e)}$.
    
    \item[(ii)] If $e, f \in \G_0$ are such that $e \wedge f$ is defined, then $A_{e \wedge f} = A_e \cap A_f$.
\end{enumerate}
\end{lemma}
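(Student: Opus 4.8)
The plan is to reduce both statements to the defining property of a strong action, namely $A_{(e|g)} = A_e \cap A_g$ for $e \leq r(g)$, by rewriting the restrictions that occur in $A_{(g|e)}$ and $A_{e\wedge f}$ as corestrictions and then invoking the uniqueness clauses of (OG3) and (OG3*).

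For (i), I would first set $f := r(g|e)$ and note that $f \leq r(g)$, since the range map preserves the order in an ordered groupoid (if $x \leq y$ then $r(x)\leq r(y)$, which follows from (OG1) and (OG2)). The key observation is that $(g|e)$ is an element below $g$ whose range is $f$; since the corestriction $(f|g)$ is, by (OG3*), the \emph{unique} element below $g$ with range $f$, uniqueness forces $(g|e) = (f|g)$. Applying the strong condition to $(f|g)$ then gives $A_{(g|e)} = A_{(f|g)} = A_f \cap A_g = A_g \cap A_{r(g|e)}$, which is exactly (i). No computation beyond this identification is needed.

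For (ii), I would use the pseudoproduct characterization (PS) from Lemma \ref{condequivacparcialforte}. Writing $m := e \wedge f$, the first step is to compute the pseudoproduct of the two identities: since $d(e)=e$ and $r(f)=f$, we have $d(e)\wedge r(f) = m$, and because $(e|m)$ and $(m|f)$ are the unique elements below $e$ (resp.\ $f$) with domain (resp.\ range) $m$, both equal the identity $m$; hence $e * f = mm = m$. Feeding this into (PS) yields $\alpha_e \circ \alpha_f = \alpha_m \circ \mathrm{Id}_{A_f}$. Finally I would compare the two sides as partial maps: $\alpha_e \circ \alpha_f = \mathrm{Id}_{A_e}\circ \mathrm{Id}_{A_f}$ is the identity on $A_e \cap A_f$, while $\alpha_m \circ \mathrm{Id}_{A_f}$ is the identity on $A_m \cap A_f = A_m$ (using $A_m \subseteq A_f$, which holds since $m \leq f$). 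Equality of domains gives $A_e \cap A_f = A_m = A_{e\wedge f}$.

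The steps are short, so there is no serious computational obstacle; the only real content is spotting these two reductions. The points I would be most careful about are the identification $(g|e) = (r(g|e)|g)$ in (i) and the claim $e*f = e\wedge f$ in (ii): both rest on the uniqueness clauses of (OG3)/(OG3*), so I would make sure the hypotheses ($f \leq r(g)$ for the corestriction in (i), and $m \leq d(e)$, $m \leq r(f)$ for the two restrictions in (ii)) are explicitly in place before invoking uniqueness. As a sanity check, one inclusion in each part, namely $A_{(g|e)} \subseteq A_g \cap A_{r(g|e)}$ and $A_{e\wedge f} \subseteq A_e \cap A_f$, is immediate from (PO).
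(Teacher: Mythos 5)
Your proof is correct and follows essentially the same route as the paper: part (i) is the same identification $(g|e)=(r(g|e)|g)$ via uniqueness of corestrictions followed by the strong condition, and part (ii) is the same application of (PS) to $\alpha_e\circ\alpha_f$ with $e*f=e\wedge f$, the only cosmetic difference being that you compare domains where the paper compares images.
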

\begin{proof}
(i): Notice that $(g|e) = (r(g|e)|g)$ by the uniqueness of restrictions. Hence $A_{(g|e)} = A_{(r(g|e)|g)} = A_g \cap A_{r(g|e)}$.

(ii): From (PS) it follows that $\alpha_e \circ \alpha_f = \alpha_{e * f} \circ \alpha_f = \alpha_{e \wedge f} \circ \alpha_f = \alpha_{e \wedge f}$. Therefore $A_{e \wedge f} = \text{Im}(\alpha_{e \wedge f}) = \text{Im}(\alpha_e \circ \alpha_f) = \alpha_e(A_e \cap A_f) = A_e \cap A_f.$ \qedhere
\end{proof}

\begin{defi}
Let $\G$ be an ordered groupoid. We say that $\G$ is \emph{pseudoassociative} if $\exists (g * h) * k \iff \exists g * (h * k)$, for all $g,h,k \in \G$.
\end{defi}

\begin{obs}
In the case that $\exists (g * h) * k$ and $\exists g * (h * k)$, we have by \cite[Lemma 4.1.6]{lawson1998inverse} that these pseudoproducts coincide. Inductive groupoids are always pseudoassociative.
\end{obs}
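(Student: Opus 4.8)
The plan is to treat the two assertions of the remark separately. The first---that whenever both $(g*h)*k$ and $g*(h*k)$ are defined they agree---I would simply take from the cited \cite[Lemma 4.1.6]{lawson1998inverse}, which records precisely the associativity of the pseudoproduct on its domain of definition; no independent argument is needed. Thus the only genuine content to establish is the second assertion, that every inductive groupoid is pseudoassociative, and for this the single key observation is that in an inductive groupoid the pseudoproduct is \emph{total}.

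To see this, recall that $g*h$ is defined exactly when the meet $d(g)\wedge r(h)$ exists in $\G_0$. Since $\G$ is inductive, $\G_0$ is a meet semilattice, so every pair of objects admits a meet; as $d(g),r(h)\in\G_0$ for all $g,h\in\G$, the meet $d(g)\wedge r(h)$ always exists and hence $g*h$ is defined for every pair $g,h\in\G$. I would also verify that the resulting symbol is a genuine element of $\G$: writing $e:=d(g)\wedge r(h)$, one has $e\leq d(g)$ and $e\leq r(h)$, so (OG3) and (OG3*) supply $(g\,|\,e)$ and $(e\,|\,h)$ with $d(g\,|\,e)=e=r(e\,|\,h)$, whence the product $(g\,|\,e)(e\,|\,h)$ defining $g*h$ is indeed composable.

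With totality in hand the pseudoassociativity is immediate. For arbitrary $g,h,k\in\G$, each of the iterated expressions $(g*h)*k$ and $g*(h*k)$ is built from finitely many pseudoproducts of pairs, every one of which is defined by the previous paragraph; so both $(g*h)*k$ and $g*(h*k)$ are always defined. Consequently the biconditional $\exists\,(g*h)*k \iff \exists\,g*(h*k)$ holds trivially (both sides being always true), and $\G$ is pseudoassociative. Since the claim reduces to totality of the pseudoproduct, I expect no substantive obstacle; the only point deserving a line of care is confirming that the defining formula for $g*h$ produces an actual arrow of $\G$, i.e. that the restriction and corestriction furnished by (OG3) and (OG3*) are composable---exactly what the identity $d(g\,|\,e)=e=r(e\,|\,h)$ records.
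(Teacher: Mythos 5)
Your proposal is correct and supplies exactly the justification the paper leaves implicit: the remark is stated without proof, and the intended argument is precisely that a meet semilattice of objects makes the pseudoproduct total, so both iterated pseudoproducts always exist and the biconditional is vacuous, with their equality then given by the cited Lemma 4.1.6 of Lawson. Your additional check that $d(g\,|\,e)=e=r(e\,|\,h)$ makes the defining product composable is a sound (if routine) point of care.
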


Now we have all the tools needed to characterize the uniqueness of minimal ordered globalizations.

\begin{theorem} \label{teounicglob}
Let $\alpha = (A_g, \alpha_g)_{g \in \G}$ be a preunital, strong P.O. action of a pseudoassociative groupoid $\G$ on a ring $A$. Then $\alpha$ admits a minimal globalization if and only if $\alpha$ is unital. Furthermore, the minimal globalization is unique up to equivalence.
\end{theorem}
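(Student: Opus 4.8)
The plan is to handle the three assertions in turn, relying on Theorem \ref{teoglob} for the core of the existence machinery and reserving the new hypotheses (strong, pseudoassociative) for the minimality refinement and for uniqueness. The necessity of unitality is immediate: a minimal globalization is in particular a globalization, so the implication ``$\alpha$ admits a minimal globalization $\Rightarrow$ $\alpha$ is unital'' is already contained in the ($\Rightarrow$) direction of Theorem \ref{teoglob}, and no new argument is needed.

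For existence when $\alpha$ is unital, I would start from the globalization $\beta = (B_g,\beta_g)_{g\in\G}$ produced in the proof of Theorem \ref{teoglob}, which satisfies (iv) through the sum over all $h$ with $r(h)\leq r(g)$. In the function model $\mathcal{F}(\G,A)$ the generators of $B_{r(g)}$ coming from $h$ with $r(h)<r(g)$ are supported on strictly smaller ranges, so the naive sub-object $\sum_{r(h)=r(g)}\beta_h(\varphi_{d(h)}(A_{d(h)}))$ is a global action but fails the ordered inclusion $B_e\subseteq B_f$ for $e\leq f$. The remedy I would pursue is to reglue the embeddings across the order, namely to pass to the quotient of $B$ by the $\beta$-invariant ideal generated by $\{\varphi_f(a)-\varphi_e(a): e\leq f \text{ in }\G_0,\ a\in A_e\}$, so that the induced injections become compatible with the inclusions $A_e\subseteq A_f$; on the resulting ring the pieces prescribed by (iv') do assemble into an ordered global action, and conditions (i)--(iii) descend from $\beta$. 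This is where the hypotheses enter: strongness, in the form of Lemma \ref{lemaacforte} and property (PS) of Lemma \ref{condequivacparcialforte}, should guarantee that the regluing does not collapse the $\alpha$-data, so that the induced $\bar\varphi_e$ stay injective and (ii) is preserved, while pseudoassociativity ensures that the multiplication of reglued generators, governed by the pseudoproduct, is associative and closes up into the same family of generators. The main technical point of this half is precisely to verify that the quotient is genuinely ordered while the $\bar\varphi_e$ remain injective.

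Finally, for uniqueness up to equivalence, let $\beta$ (on $B$, with $\varphi_e$) and $\beta'$ (on $B'$, with $\varphi'_e$) be two minimal globalizations. I would define, for each $e\in\G_0$, a map $\psi_e\colon B_e\to B'_e$ on the generating set dictated by (iv') by
\begin{align*}
\psi_e\Big(\sum_i \beta_{h_i}(\varphi_{d(h_i)}(a_i))\Big) = \sum_i \beta'_{h_i}(\varphi'_{d(h_i)}(a_i)),
\end{align*}
where each $r(h_i)=e$ and $a_i\in A_{d(h_i)}$, and then check that $\{\psi_e\}$ is a family of ring isomorphisms intertwining $\beta$ and $\beta'$ and compatible with the embeddings, i.e.\ an equivalence. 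The crux is well-definedness, and my approach would be to show that a relation $\sum_i\beta_{h_i}(\varphi_{d(h_i)}(a_i))=0$ can be detected intrinsically in terms of $\alpha$ alone, by ``evaluating'' at each $k$ with $r(k)=e$ through $\varphi_{d(k)}^{-1}\circ(\text{projection of }\beta_{k^{-1}}(\,\cdot\,)\text{ onto }\varphi_{d(k)}(A_{d(k)}))$; since $d(k^{-1})=r(h_i)=e$ the composite $\beta_{k^{-1}}\beta_{h_i}=\beta_{k^{-1}h_i}$ is defined, and conditions (ii)--(iii) together with the idempotents $1_g$ turn this evaluation into an explicit $\alpha$-expression of the type appearing in the proof of Theorem \ref{teoglob}. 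Because this characterization of vanishing uses only $\alpha$, it holds verbatim for $\beta'$, yielding well-definedness of $\psi_e$ and, symmetrically, of its inverse; that $\psi_e$ respects products reduces via (PS) to the pseudoproduct $h_i*h_j$, and pseudoassociativity is exactly what makes this rule consistent in both $B$ and $B'$. I expect this well-definedness step --- equivalently, pinning down the kernel of the generator presentation of $B_e$ purely from the partial action $\alpha$ --- to be the main obstacle of the whole proof.
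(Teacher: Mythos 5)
Your treatment of necessity (reduce to Theorem \ref{teoglob}) and your outline of uniqueness (define $\psi_e$ on the generators supplied by (iv') and detect vanishing intrinsically in terms of $\alpha$ by composing with $\beta_{k^{-1}}$) are both in the right spirit; the paper handles uniqueness by citing the analogous non-ordered result of Bagio--Paques, and your sketch is essentially that argument. The problem is the existence half: the quotient you propose does not work. In the model of Theorem \ref{teoglob} the embedding $\varphi_e(a)$ is supported on $\{h : r(h)=e\}$, so for $e<f$ the elements $\varphi_f(a)$ and $\varphi_e(b)$ have disjoint supports and multiply to zero pointwise in $\mathcal{F}(\G,A)$. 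Hence any ideal $I$ containing the generators $\varphi_f(a)-\varphi_e(a)$ also contains
\begin{align*}
\bigl(\varphi_f(a)-\varphi_e(a)\bigr)\varphi_e(b) \;=\; -\,\varphi_e(a)\varphi_e(b) \;=\; -\,\varphi_e(ab),
\end{align*}
and since $A_e$ is unital (so $A_e^2=A_e$) this forces $\varphi_e(A_e)\subseteq I$ for every non-maximal $e$. The induced map $\bar\varphi_e$ is therefore zero, not injective, and conditions (i)--(iii) cannot descend. You can see this concretely in Example \ref{exsemunicidade}: there $(e_2,0,e_2,0,-e_2)\cdot(0,0,0,0,e_2)=(0,0,0,0,-e_2)=-\varphi_e(e_2)$ already lies in your ideal. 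No amount of strongness or pseudoassociativity rescues this; the collapse is forced by the disjointness of supports, which is baked into the Theorem \ref{teoglob} construction.

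The paper's remedy is not to quotient but to change the model from the start: it replaces the support sets $\G_g=\{h: r(h)\leq r(g)\}$ by $\mathcal{E}_g=\{h: \exists\, g^{-1}*h\}$, defines $\gamma_g(f)|_h=f(g^{-1}*h)$ via the pseudoproduct, and replaces $\varphi_e$ by $\psi_e$ with $\psi_e(a)|_h=\alpha_{h^{-1}}(a1_h)$ for \emph{all} $h\in\mathcal{E}_e$. In other words, the identifications you are trying to impose by a quotient are instead built into the embedding itself, so that $\psi_e(a)$ and $\psi_f(a)$ already agree where they should and no collapsing is needed. In that construction pseudoassociativity is what makes $\gamma_{gh}=\gamma_g\circ\gamma_h$ (hence a global action), and strongness, through Lemma \ref{condequivacparcialforte} (PS) and Lemma \ref{lemaacforte}, is what makes the verifications of (i)--(iii) and of the idempotent identities such as $1_{(k^{-1}|f)}=1_{k^{-1}}1_{r(k^{-1}|f)}$ go through. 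You would need to redo your existence argument along these lines rather than as a quotient of the Theorem \ref{teoglob} globalization.
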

\begin{proof}
Consider, as in the proof of Theorem \ref{teoglob}, $\mathcal{F} = \mathcal{F}(\G,A)$ the ring of the maps of $\G$ on $A$. Given $g \in \G$, define $\mathcal{E}_g = \{h \in \G : \exists g^{-1} * h \}$. Define $F_g = \{ f \in \mathcal{F} : f(h) = 0, \text{ for all } h \notin \mathcal{E}_g \}$.

Clearly $F_g = F_{r(g)}$, for all $g \in \G$. Consider $\gamma_g : F_{g^{-1}} \to F_g$, where 
\begin{align*}
    \gamma_g(f)|_h = \begin{cases}
            f(g^{-1} * h), \text{ if } h \in \mathcal{E}_g, \\
            0, \text{ otherwise.}
        \end{cases}
\end{align*}

It is easy to see that $\gamma_g$ is a well defined ring homomorphism, for all $g \in \G$. If $(g,h) \in \G_2$, $f \in F_{h^{-1}}$ and $k \in \mathcal{E}_g$ then
\begin{align*}
    \gamma_{gh}(f)|_k & = f((gh)^{-1}*k) = f((h^{-1} g^{-1}) * k) \\
    & = f((h^{-1} * g^{-1}) * k) = f(h^{-1} * (g^{-1} * k)) \\
    & = \gamma_h(f)|_{g^{-1} * k} = \gamma_g \circ \gamma_h(f)|_k.
\end{align*}

If $k \notin \mathcal{E}_g$, then $\gamma_{gh}(f)|_k = 0 = \gamma_g \circ \gamma_h(f)|_k$, so $\gamma_{gh} = \gamma_g \circ \gamma_h$.

For all $e \in \G_0$, define $\psi_e : A_e \to F_e$ by
\begin{align*}
    \psi_e(a)|_h = \begin{cases}
    \alpha_{h^{-1}}(a1_h), \text{ if } h \in \mathcal{E}_e, \\
    0, \text{ otherwise},
    \end{cases}
\end{align*}
for all $a \in A_e, h \in \G$. We have that $\psi_e(a)|_e = \alpha_e(a1_e) = a$, for all $a \in A_e$. Hence $\psi_e$ is a monomorphism, for all $e \in \G_0$.

Let $B_g$ be the subring of $F_g$ generated by $\bigcup_{r(h) = r(g)} \gamma_h(\psi_{d(h)}(A_{d(h)}))$, for all $g \in \G$. Notice that $B_g \subseteq F_g$ and that $B_g$ may not be unital. Consider $B = \sum_{e \in \G_0} B_e$. Define $\beta_g := \gamma_g|_{B_{g^{-1}}}$, for all $g \in \G$.

Given $e \in \G_0$, $g \in \G$ with $r(g) = e$ and $a \in A_{d(g)}$, we have that
\begin{align*}
    \beta_e \circ \beta_g(\psi_{d(g)}(a)) = \gamma_e \circ \gamma_g(\psi_{d(g)}(a)) = \gamma_{eg}(\psi_{d(g)}(a)) = \gamma_g(\psi_{d(g)}(a)) = \beta_g(\psi_{d(g)}(a)),
\end{align*}
from where it follows that $\beta_e = \text{Id}_{B_e}$. Since $\gamma_g \circ \gamma_h = \gamma_{gh}$, for all $(g,h) \in \G_2$, then $\beta_g \circ \beta_h = \beta_{gh}$, for all $(g,h) \in \G_2$.

Now we prove that $\beta$ satisfies (PS). Indeed, given $g,h,k, \ell \in \G$ such that $h \in \mathcal{E}_g$, $r(k) = d(h)$ and $a \in A_{d(k)}$, we have that
\begin{align*}
    \beta_g \circ \beta_h(\beta_k(\psi_{d(k)}(a)))|_\ell & = \begin{cases} 
    \beta_h(\beta_k(\psi_{d(k)}(a)))|_{g^{-1} * \ell}, \text{ if } \ell \in \mathcal{E}_g, \\
    0, \text{ otherwise} 
    \end{cases} \\
    & = \begin{cases} 
    \beta_k(\psi_{d(k)}(a))|_{h^{-1} * (g^{-1} * \ell)}, \text{ if } \ell \in \mathcal{E}_g, \\
    0, \text{ otherwise}
    \end{cases} \\
    & = \begin{cases} 
    \beta_k(\psi_{d(k)}(a))|_{(h^{-1} * g^{-1}) * \ell}, \text{ if } \ell \in \mathcal{E}_g, \\
    0, \text{ otherwise}
    \end{cases} \\
    & = \begin{cases} 
    \beta_k(\psi_{d(k)}(a))|_{((d(h) * h^{-1}) * g^{-1}) * \ell}, \text{ if } \ell \in \mathcal{E}_g, \\
    0, \text{ otherwise}
    \end{cases}
    \end{align*}

    \begin{align*}
     & = \begin{cases} 
    \beta_k(\psi_{d(k)}(a))|_{d(h) * (h^{-1} * g^{-1}) * \ell}, \text{ if } \ell \in \mathcal{E}_g, \\
    0, \text{ otherwise}
    \end{cases} \\
    & = \begin{cases} 
    \beta_{d(h)}(\beta_k(\psi_{d(k)}(a)))|_{(h^{-1} * g^{-1}) * \ell}, \text{ if } \ell \in \mathcal{E}_g, \\
    0, \text{ otherwise}
    \end{cases} \\
    & =
    \beta_{g*h} \circ \beta_{d(h)}(\beta_k(\psi_{d(k)}(a)))|_{\ell}.
\end{align*}

To finish the proof, we shall show that $\beta$ satisfies (i)-(iii) from the definition of minimal globalization. 

(iii): Let $g, h \in \G$ e $a \in A_{g^{-1}}$. Then
\begin{align*}
    \psi_{r(g)} \circ \alpha_g(a)|_h & = \begin{cases}
        \alpha_{h^{-1}}(\alpha_g(a)1_h), \text{ if } h \in \mathcal{E}_g, \\
        0, \text{ otherwise}
    \end{cases}
\end{align*}
and
\begin{align*}
    \beta_g \circ \psi_{d(g)}(a)|_h & = \begin{cases}
        \psi_{d(g)}(a)|_{g^{-1} * h}, \text{ if } h \in \mathcal{E}_g, \\
        0, \text{ otherwise}
    \end{cases} = \begin{cases}
        \alpha_{h^{-1} * g}(a1_{g^{-1}*h}), \text{ if } h \in \mathcal{E}_g, \\
        0, \text{ otherwise.}
    \end{cases}
\end{align*}

Then we can work only with the case in which the meet $e := r(g) \wedge r(h)$ is defined. As $a \in A_{g^{-1}}$ we have that $a1_{g^{-1}*h} \in A_{g^{-1}} \cap A_{g^{-1}*h} = A_{(e|g^{-1})} \cap A_{g^{-1}*h}$. Hence
\begin{align*}
    \beta_g(\psi_{d(g)}(a))|_h & =  \alpha_{h^{-1} * g}(a1_{g^{-1}*h}) = \alpha_{(h^{-1}|e)} \circ \alpha_{(e|g)}(a1_{g^{-1}*h})  \\
    & = \alpha_{(h^{-1}|e)}(\alpha_{(e|g)}(a1_{(g^{-1}|e)})1_{(e|h)}) = \alpha_{h^{-1}}(\alpha_{g}(a1_{(g^{-1}|e)})1_{(e|h)}) \\
    & = \alpha_{h^{-1}}(\alpha_{g}(a)\alpha_g(1_{g^{-1}}1_{(g^{-1}|e)})1_{(e|h)}) = \alpha_{h^{-1}}(\alpha_{g}(a)\alpha_{(e|g)}(1_{(g^{-1}|e)})1_{(e|h)}) \\
    & = \alpha_{h^{-1}}(\alpha_{g}(a)1_{(e|g)}1_{(e|h)}) = \alpha_{h^{-1}}(\alpha_{g}(a)1_e1_g1_h) \\
    & = \alpha_{h^{-1}}(\alpha_{g}(a)1_{r(g)}1_{r(h)}1_g1_h)= \alpha_{h^{-1}}(\alpha_{g}(a)1_g1_h) = \alpha_{h^{-1}}(\alpha_g(a)1_h).
\end{align*}

Therefore $\beta_g \circ \psi_{d(g)}(a) = \psi_{r(g)} \circ \alpha_g(a)$.

(ii): Let $g \in \G$ and $c \in \psi_{r(g)}(A_{r(g)}) \cap \beta_g(\psi_{d(g)}(A_{d(g)}))$. Then there are $a \in A_{r(g)}, b \in A_{d(g)}$ such that $c = \psi_{r(g)}(a) = \beta_g(\psi_{d(g)}(b))$. Thus,
\begin{align*}
    a = \alpha_{r(g)^{-1}}(a1_{r(g)}) = \psi_{r(g)}(a)|_{r(g)} = \beta_g(\psi_{d(g)}(b))|_{r(g)} = \psi_{d(g)}(b)|_{g^{-1}} = \alpha_g(b1_{g^{-1}}) \in A_g,
\end{align*}
from where it follows that $c = \psi_{r(g)}(a) \in \psi_{r(g)}(A_g)$.

Conversely, if $c \in \psi_{r(g)}(A_g)$, then $c = \psi_{r(g)}(a)$, for some $a \in A_g$. Taking $b = \alpha_{g^{-1}}(a) \in A_{g^{-1}}$, we have from (iii) that $\beta_g(\psi_{d(g)}(b)) = \psi_{r(g)}(\alpha_g(b)) = \psi_{r(g)}(a) = c$.

Thus $c \in \psi_{r(g)}(A_g) \cap \beta_g(\psi_{d(g)}(A_{g^{-1}})) \subseteq \psi_{r(g)}(A_{r(g)}) \cap \beta_g(\psi_{d(g)}(A_{d(g)}))$.

(i): Let $e \in \G_0$, $h \in \G$ be such that $r(h) = e$, $a \in A_{d(h)}$ and $b \in A_e$. It is enough to prove that $\psi_e(b)\beta_h(\psi_{d(h)}(a)), \beta_h(\psi_{d(h)}(a))\psi_e(b) \in \psi_e(A_e)$. For that, take $k \in \G$. We have that $\beta_h(\psi_{d(h)}(a))\psi_e(b)|_k = \beta_h(\psi_{d(h)}(a))|_k\psi_e(b)|_k$. On the one hand,
\begin{align*}
    \psi_e(b)|_k = \begin{cases}
        \alpha_{k^{-1}}(b1_k), \text{ if } k \in \mathcal{E}_e, \\
        0, \text{ otherwise.}
    \end{cases}
\end{align*}

On the other hand,
\begin{align*}
    \beta_h(\psi_{d(h)}(a))|_k & = \begin{cases}
        \psi_{d(h)}(a)|_{h^{-1}*k}, \text{ if } k \in \mathcal{E}_h = \mathcal{E}_e, \\
        0, \text{ otherwise}
    \end{cases} = \begin{cases}
        \alpha_{k^{-1}*h}(a1_{h^{-1}*k}), \text{ if } k \in \mathcal{E}_e, \\
        0, \text{ otherwise.}
    \end{cases}
\end{align*}

Thus we can work only with the case in which $k \in \mathcal{E}_e$. Then, denoting by $f = r(k) \wedge r(h)$,
\begin{align*}
    \beta_h(\psi_{d(h)}(a))\psi_e(b)|_k & = \alpha_{k^{-1}*h}(a1_{h^{-1}*k})\alpha_{k^{-1}}(b1_{k}) = \alpha_{(k^{-1}|f)(f|h)}(a1_{(h^{-1}|f)(f|k)})\alpha_{k^{-1}}(b1_{k}) \\
    & = \alpha_{(k^{-1}|f)(f|h)}(a1_{(h^{-1}|f)(f|k)})1_{(k^{-1}|f)(f|h)}1_{k^{-1}}\alpha_{k^{-1}}(b1_{k}) \\
    & = \alpha_{(k^{-1}|f)(f|h)}(a1_{(h^{-1}|f)(f|k)})1_{(k^{-1}|f)(f|h)}1_{r(k^{-1}|f)}1_{k^{-1}}\alpha_{k^{-1}}(b1_{k}).
\end{align*}

By Lemma \ref{lemaacforte}(ii), we have that $A_{(k^{-1}|f)} = A_{k^{-1}} \cap A_{r(k^{-1}|f)}$, from where it follows that $1_{(k^{-1}|f)} = 1_{k^{-1}}1_{r(k^{-1}|f)}$. Therefore,
\begin{align*}
    \beta_h(\psi_{d(h)}(a))\psi_e(b)|_k & = \alpha_{(k^{-1}|f)(f|h)}(a1_{(h^{-1}|f)(f|k)})1_{(k^{-1}|f)(f|h)}1_{(k^{-1}|f)}\alpha_{k^{-1}}(b1_{k}).
\end{align*}

Now, by Proposition \ref{propriedadesacpargrp}(ii), we have that $$A_{(k^{-1}|f)(f|h)} \cap A_{(k^{-1}|f)} = \alpha_{(k^{-1}|f)(f|h)}(A_{(h^{-1}|f)(f|k)} \cap A_{(h^{-1}|f)}),$$ so $1_{(k^{-1}|f)(f|h)}1_{(k^{-1}|f)} = \alpha_{(k^{-1}|f)(f|h)}(1_{(h^{-1}|f)(f|k)}1_{(h^{-1}|f)})$. Thus,
\begin{align*}
    \beta_h(\psi_{d(h)}(a))\psi_e(b)|_k & = \alpha_{(k^{-1}|f)(f|h)}(a1_{(h^{-1}|f)(f|k)})\alpha_{(k^{-1}|f)(f|h)}(1_{(h^{-1}|f)(f|k)}1_{(h^{-1}|f)})\alpha_{k^{-1}}(b1_{k}) \\
    & = \alpha_{(k^{-1}|f)(f|h)}(a1_{(h^{-1}|f)(f|k)}1_{(h^{-1}|f)})\alpha_{k^{-1}}(b1_{k}) \\
    & = \alpha_{(k^{-1}|f)}(\alpha_{(f|h)}(a1_{(h^{-1}|f)})1_{(f|k)})\alpha_{k^{-1}}(b1_{k}) \\
    & = \alpha_{k^{-1}}(\alpha_{(f|h)}(a1_{(h^{-1}|f)})1_{(f|k)})\alpha_{k^{-1}}(b1_{k}) \\
    & = \alpha_{k^{-1}}(\alpha_{(f|h)}(a1_{(h^{-1}|f)})1_{(f|k)}b1_{k}) = \alpha_{k^{-1}}(\alpha_{(f|h)}(a1_{(h^{-1}|f)})1_{f}1_kb1_{k}) \\
    & = \alpha_{k^{-1}}(\alpha_{(f|h)}(a1_{(h^{-1}|f)})b1_k) = \psi_e(\alpha_{(f|h)}(a1_{(h^{-1}|f)})b)|_k.
\end{align*}

Notice that $b \in A_e$ implies $\alpha_{(f|h)}(a1_{(h^{-1}|f)})b \in A_e$, since $A_e$ is an ideal of $A$. Analogously we have that $\psi_e(b)\beta_h(\psi_{d(h)}) \in \psi_e(A_e)$.

The uniqueness now follows by \cite[Theorem 2.1]{bagio2012partial}.
\end{proof}

\begin{exe} \label{exeglobmin}
Resuming Example \ref{exsemunicidade} and applying the construction of Theorem \ref{teounicglob}, we obtain $\mathcal{E}_g = \G, \text{ for all } g \in \G$, from where $F_g = \mathcal{F}, \text{ for all } g \in \G$.

Thus, $\gamma_s(a,b,c,d,f) = (d,c,b,a,f) = \gamma_{s^{-1}}(a,b,c,d,f)$ and $\gamma_g = Id_{\mathcal{F}}$, for all $f \in \G_0$. Furthermore, notice that $\psi_{r(s)}(ae_2 + be_3) = (ae_2,ae_2,ae_2 + be_3,ae_2,ae_2)$ and $\psi_{d(s)}(ae_2) = \psi_e(ae_2) = (ae_2,ae_2,ae_2,ae_2,ae_2)$.

Therefore $B_s' = \{(ae_2,ae_2,ae_2+be_3,ae_2,ae_2) : a,b \in R \}$, $B_{s^{-1}}' = \{(ae_2, ae_2 + be_3, ae_2,$ $ ae_2, ae_2) : a,b \in R\}$ and $B_e' = Re_2(1,1,1,1,1)$. Hence $B' = B_s' + B_{s^{-1}}' + B_e' = \{(ae_2,ae_2 + be_3, ae_2 + ce_3, ae_2, ae_2) : a,b,c \in R\}$ and then $B' \simeq B$ and $B_g' \simeq B_g$, for all $g \in \G$. So $\beta' \simeq \beta$.
\end{exe}

\section{Applications}

We conclude by presenting two applications of the results of the previous sections. We first show that the partial skew ordered groupoid ring is Morita equivalent to the skew ordered groupoid ring relative to its globalization. Afterwards, as a consequence of the ESN Theorem \cite[Theorem 4.1.8]{lawson1998inverse}, we provide a necessary and sufficient condition for a partial action of an inverse semigroup to have a (unique) minimal globalization.

\subsection{A Morita Context}

To simplify the notation, in this subsection we will assume that $\varphi_e = \iota_e : A_e \to B_e$ is the usual inclusion, so that $A_e$ is an ideal of $B_e$, for all $e \in \G_0$. Also, assume that $\G_0$ is finite.

\begin{prop} \label{propmorita}
Let $\alpha = (A_g,\delta_g)_{g \in \G}$ be a unital P.O. action of the ordered groupoid $\G$ on a ring $A$ and $\beta = (B_g,\beta_g)_{g \in \G}$ be a globalization of $\alpha$ on a ring $B$. Let $R = A \ltimes_\alpha^o \G$ and $T = B \ltimes_\beta^o \G$. Then:
\begin{enumerate}
    \item[(i)] $T1_R = \sum_{g \in \G} \overline{\beta_g(A_{d(g)})\delta_g}$;
    
    \item[(ii)] $1_RT = \sum_{g \in \G} \overline{A_{r(g)}\delta_{g}}$;
    
    \item[(iii)] $1_RT1_R = R$;
    
    \item[(iv)] $T1_RT = T$.
\end{enumerate}
\end{prop}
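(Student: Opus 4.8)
The plan is to fix $1_R = \sum_{e \in \G_0}\overline{1_e\delta_e}$, which is the identity of $R$ because $\G_0$ is finite and $\alpha$ is unital, and to regard it as an element of $T$ via the component inclusions $A_e \hookrightarrow B_e$. Since $\alpha_g = \beta_g|_{A_{g^{-1}}}$, the assignment $a_g\delta_g \mapsto a_g\delta_g$ is multiplicative, so it defines a ring homomorphism $A\ltimes_\alpha\G \to B\ltimes_\beta\G$; it sends each generator $a\delta_g - a\delta_h$ of $N$ (with $g \le h$, $a \in A_g \subseteq A_h$) into $N_B$, hence descends to a homomorphism $\Phi\colon R \to T$ with $\Phi(1_R) = 1_R$. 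A one-line check shows $1_R$ is idempotent in $T$ (the cross terms $1_e\delta_e\cdot 1_f\delta_f$ vanish for $e \ne f$), so the four products are meaningful.

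First I would establish (i) and (ii) by the same elementary computation. For $b_g \in B_g$ the product $\overline{b_g\delta_g}\,1_R$ survives only on the factor $e = d(g)$ and equals $\overline{\beta_g(\beta_{g^{-1}}(b_g)1_{d(g)})\delta_g}$; as $A_{d(g)} \triangleleft B_{d(g)}$ its coefficient lies in $\beta_g(A_{d(g)})$, giving $T1_R \subseteq \sum_g \overline{\beta_g(A_{d(g)})\delta_g}$, while $\overline{\beta_g(a)\delta_g}\,1_R = \overline{\beta_g(a)\delta_g}$ for $a \in A_{d(g)}$ gives the reverse inclusion. The mirror computation with $1_R$ on the left, using $r(g)$ in place of $d(g)$, produces (ii).

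For (iii) I would feed (i) into the left-hand computation behind (ii): $1_R\overline{\beta_g(a)\delta_g} = \overline{1_{r(g)}\beta_g(a)\delta_g}$, and here $1_{r(g)}\beta_g(a)$ belongs to $A_{r(g)} \cap \beta_g(A_{d(g)})$ because $A_{r(g)}$ and $\beta_g(A_{d(g)})$ are both ideals of $B_{r(g)}$. Condition (ii) of the definition of globalization identifies this intersection with $A_g$, so $1_R T 1_R = \sum_g \overline{A_g\delta_g} = \Phi(R)$; identifying $R$ with $\Phi(R)$ gives (iii). The delicate point here is the injectivity of $\Phi$, that is $N_B \cap (A\ltimes_\alpha\G) \subseteq N$, which is what makes the corner genuinely $R$ rather than a proper quotient.

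The substantive step is (iv), the fullness of $1_R$. Writing $T1_RT = (T1_R)(1_RT)$ and using $T = \sum_g \overline{B_g\delta_g}$, it suffices to place each $\overline{b\delta_g}$ with $b \in B_g = B_{r(g)}$ into $T1_RT$. Condition (iv) of the definition of globalization expands $b$ as a sum of terms $\beta_h(a)$ with $r(h) \le r(g)$ and $a \in A_{d(h)}$, reducing the task to $\overline{\beta_h(a)\delta_g}$. The order structure then does the work: for the corestriction $g' = (r(h)|g) \le g$ with $r(g') = r(h)$, the defining relation of $N_B$ yields $\overline{\beta_h(a)\delta_g} = \overline{\beta_h(a)\delta_{g'}}$ since $\beta_h(a) \in B_{r(h)} = B_{g'}$; and because now $r(h) = r(g')$, the product $h^{-1}g'$ is defined with $h(h^{-1}g') = g'$, so $\overline{\beta_h(a)\delta_{g'}} = \overline{\beta_h(a)\delta_h}\cdot\overline{1_{d(h)}\delta_{h^{-1}g'}}$. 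By (i) the first factor is in $T1_R$ and by (ii) the second is in $1_RT$, whence $\overline{\beta_h(a)\delta_g} \in T1_R1_RT = T1_RT$ and $T = T1_RT$. I expect this factorization --- aligning $r(h)$ with $r(g)$ through a corestriction and then splitting off $1_{d(h)}$ --- to be the main obstacle, since it is exactly where the spanning property (iv) must be married to the ordered-groupoid axioms; the injectivity behind (iii) is the secondary subtlety.
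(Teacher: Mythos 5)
Your argument is essentially the paper's: for (iv) the paper performs exactly your factorization, namely $\overline{\beta_g(a)\delta_h} = \overline{\beta_g(a)\delta_{(r(g)|h)}} = \overline{\beta_g(a)\delta_g}\cdot\overline{1_{d(g)}\delta_{g^{-1}(r(g)|h)}} \in (T1_R)(1_RT)$, and it outsources (i)--(iii) to the unordered computations of Bagio--Paques, which are precisely the ones you write out. The injectivity point you flag in (iii) (that $N_B \cap (A\ltimes_\alpha\G) \subseteq N$, so that the corner is $R$ itself and not a proper quotient) is left implicit in the paper as well, so you have identified, but not exceeded, the level of detail the paper supplies.
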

\begin{proof}
First, notice that the crossed products $A \ltimes_\alpha \G$ and $B \ltimes_\beta \G$ are associative, so $R$ and $T$ are defined. Also, since $\alpha$ is unital and $\G_0$ is finite, we have that $1_R$ is defined. The proof of (i)-(iii) are similar to the nonordered case \cite[Proposition 3.1]{bagio2012partial}. In order to prove (iv), it suffices to show that $T \subseteq 1_RT1_R$. Since for all $h \in \G$ we have that $B_h = \sum_{r(g) \leq r(h)} \beta_g(A_{d(g)})$, the result follows from
\begin{align*}
    \overline{\beta_g(a)\delta_h} = \overline{\beta_g(a)\delta_{(r(g)|h)}} = \overline{\beta_g(a)\delta_g} \cdot \overline{1_{r(g^{-1}(r(g)|h))}\delta_{g^{-1}(r(g)|h)}} \in (T1_R)(1_RT) = T1_RT,
\end{align*}
for all $a \in A_{d(g)}$.
\end{proof}

Let $R$ be an idempotent ring (not necessarily unital). We say that a left $R$-module $M$ is \emph{unital} if $RM = M$. Moreover, $M$ is said to be \emph{torsion-free} when $Rm = 0$ implies $m = 0$ for any $m \in M$ (c.f. \cite{garcia1991morita}). The right module definitions are analogous. We will denote by $R\text{-mod}$ the category of unital left torsion-free $R$ modules (while $\text{mod-}R$ is defined in the same way). If $R$ is a unital ring, then $R\text{-mod}$ is the usual category of left $R$-modules.

According to \cite{garcia1991morita}, a \emph{Morita context}  is a sextuple $(R,R',M,M',\varphi,\varphi')$, where $R, R'$ are idempotent rings, $M$ is a $(R,R')$-bimodule, $M'$ is a $(R',R)$-bimodule and $\varphi : M \otimes_{R'} M' \to R$ and $\varphi' : M' \otimes_R M \to R'$ are bimodule morphisms such that:
\begin{enumerate}
    \item[(i)] $\varphi(x \otimes x')y = x\varphi'(x' \otimes y)$, for all $x,y \in M$ and $x' \in M'$,
    
    \item[(ii)] $\varphi'(x' \otimes x)y' = x'\varphi(x \otimes y')$, for all $x',y' \in M'$ and $x \in M$.
\end{enumerate}

By \cite[Proposition 2.6]{garcia1991morita}, if  $_RM$, $M'_R$, $M_{R'}$ and $_{R'}M'$ are unital modules and $\varphi$ and $\varphi'$ are surjective, then the categories $R$-mod and $R'$-mod (resp. mod-$R$ and mod-$R'$) are equivalent and the rings $R$ and $R'$ are said to be \emph{Morita equivalent}.

\begin{theorem} \label{contextodemorita}
The rings $R = A \ltimes_\alpha^o \G$ and $T = B \ltimes_\beta^o \G$ are Morita equivalent.
\end{theorem}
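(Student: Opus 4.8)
The plan is to assemble the Morita context directly from Proposition \ref{propmorita} and invoke \cite[Proposition 2.6]{garcia1991morita}. Set $M = 1_R T$ and $M' = T 1_R$. Since $R = 1_R T 1_R$ by part (iii) of the proposition, $M = 1_R T$ is naturally an $(R, T)$-bimodule (left multiplication by $R = 1_R T 1_R \subseteq 1_R T \cdot$, right multiplication by $T$), and $M' = T 1_R$ is a $(T, R)$-bimodule. The two pairing maps are the obvious ones induced by multiplication in $T$:
\begin{align*}
    \varphi : M \otimes_T M' \to R, \quad \varphi(x \otimes x') = x x', \qquad
    \varphi' : M' \otimes_R M \to T, \quad \varphi'(x' \otimes x) = x' x.
\end{align*}
The associativity identities (i) and (ii) in the definition of a Morita context reduce to associativity of multiplication in $T$, so they hold automatically; the content is in checking that the codomains are correct and that everything is well defined and surjective.

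First I would verify that $\varphi$ and $\varphi'$ land where claimed and are surjective. For $\varphi$, the image $M M' = (1_R T)(T 1_R) = 1_R T^2 1_R = 1_R T 1_R = R$, using that $T$ is idempotent (it has local units, being a skew ordered groupoid ring over a preunital action with $\G_0$ finite) together with part (iii); this gives surjectivity of $\varphi$ onto $R$ immediately. For $\varphi'$, the image is $M' M = (T 1_R)(1_R T) = T 1_R T = T$ by part (iv) of Proposition \ref{propmorita}, so $\varphi'$ is surjective onto $T$. That these products are balanced over the appropriate rings (so the maps factor through the tensor products) is again a direct consequence of associativity in $T$.

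Next I would confirm the four unitality hypotheses of \cite[Proposition 2.6]{garcia1991morita}, namely that $_R M$, $M'_R$, $M_T$ and $_T M'$ are unital modules. Since $R = 1_R T 1_R$ is unital with identity $1_R$, the conditions $R M = M$ and $M' R = M'$ are clear. For $M_T = (1_R T)$ as a right $T$-module, unitality means $M T = M$, i.e. $1_R T \cdot T = 1_R T$, which holds because $T^2 = T$; symmetrically $T \cdot T 1_R = T 1_R$ gives $_T M'$ unital. One should also note that $R$ and $T$ are idempotent rings, as required by the definition of a Morita context: $R$ is unital hence idempotent, and $T$ is idempotent since $B$ has local units by construction (Remark following Theorem \ref{teoglob}). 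Having verified these, \cite[Proposition 2.6]{garcia1991morita} yields the equivalence of $R\text{-mod}$ with $T\text{-mod}$ (and $\text{mod-}R$ with $\text{mod-}T$), which is precisely the assertion that $R$ and $T$ are Morita equivalent.

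The main obstacle I anticipate is bookkeeping rather than conceptual: the rings $R$ and $T$ need not share a common identity, so one must be careful that $M = 1_R T$ and $M' = T 1_R$ genuinely carry the stated bimodule structures and that the tensor products are taken over the correct (possibly non-unital but idempotent) rings. The key computations $M M' = R$ and $M' M = T$ rest entirely on parts (iii) and (iv) of Proposition \ref{propmorita}, so once those are in hand the verification is essentially formal; the delicate point is ensuring idempotency of $T$ and the unitality of the four modules, which is where the local-unit structure of $B$ (and finiteness of $\G_0$, guaranteeing $1_R$ exists) is used.
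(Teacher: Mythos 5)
Your proposal is correct and follows essentially the same route as the paper: it takes $M = 1_R T$ and $M' = T 1_R$ with the multiplication pairings, derives surjectivity of the pairings from parts (iii) and (iv) of Proposition \ref{propmorita}, checks idempotency of $R$ and $T$ and unitality of the four modules, and concludes via \cite[Proposition 2.6]{garcia1991morita}. The only difference is that you spell out the bookkeeping (e.g.\ $MM' = 1_R T^2 1_R = R$) that the paper leaves implicit.
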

\begin{proof}
First note that $R$ is unital, since $\alpha$ is unital and $\G_0$ is finite. Hence, $R$ is idempotent. Since $B$ has local units and $\G_0$ is finite, we also have that $T$ has local units and therefore is idempotent.

Consider $M = 1_RT$ and $N = T1_R$. Clearly $M$ is a $(R,T)$-bimodule and $N$ is a $(T,R)$-bimodule. Defining the maps $\varphi : M \otimes_T N \to R$ and $\varphi' : N \otimes_R M \to T$ by $\varphi(m \otimes n) = mn$ and $\varphi'(n \otimes m) = nm$, we have that this maps are surjective and $(R,T,M,N,\varphi,\varphi')$ is a Morita context by Proposition \ref{propmorita}. It is also easy to see that $M_T,$ ${_RM}, N_R,$ and ${_TN}$ are unital modules since $R$ and $T$ are idempotent rings.
\end{proof}

\subsection{Partial Actions of Inverse Semigroups}

We will begin this subsection by reminding the reader of the notion of premorphism.

\begin{defi}
    \cite[p. 184]{gilbert2005actions} Let $\G$ and $\cH$ inductive groupoids. A map $\psi : \G \to \cH$ is said to be an \emph{inductive groupoid premorphism} if:
    \begin{enumerate}
        \item[(i)] $\psi(g)*\psi(h) \leq \psi(gh)$, for all $(g,h) \in \G_2$;

        \item[(ii)] $\psi(g)^{-1} = \psi(g^{-1})$, for all $g \in \G$;

        \item[(iii)] $\psi$ preserves orders.
    \end{enumerate}
\end{defi}

\begin{obs} \label{obspreforte} \begin{itemize}
    \item[1.] By \cite[Definition 2.6]{gould2011actions}, every inductive groupoid premorphism satisfies $\psi(g) * \psi(h) = \psi(gh) * d(\psi(h))$, for all $(g,h) \in \G_2$.
    \item[2.] By \cite[Lemma 4.2(b)]{gilbert2005actions}, we always have $d(\psi(g)) \leq \psi(d(g))$, by all $g \in \G$. 
    \item[3.] By \cite[Definition 2.6]{gould2011actions}, in all inductive groupoid premorphisms we have that if $e \in \G_0$ and $g \in \G$ are such that $e \leq d(g)$, then $d(\psi(g|e)) = \psi(e) \wedge d(\psi(g))$; and if $e \in \G_0$ and $g \in \G$ are such that $e \leq r(g)$, then $r(\psi(e|g)) = \psi(e) \wedge r(\psi(g))$.\end{itemize}
\end{obs}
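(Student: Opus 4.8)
The three items are facts about the interplay of a premorphism $\psi$ with the ordered-groupoid structure, and they are not independent: I would prove them in the order 2, 3, 1, after one preliminary reduction. The reduction is that axioms (i) and (ii) already force $\psi(\G_0) \subseteq \cH_0$. Indeed, for $e \in \G_0$ axiom (ii) gives $\psi(e) = \psi(e)^{-1}$, so $u := d(\psi(e)) = r(\psi(e))$ and, $\psi(e)$ being a loop at $u$, the pseudoproduct collapses to the honest square $\psi(e)*\psi(e) = \psi(e)\psi(e)^{-1} = u \in \cH_0$; then (i) applied to $(e,e)\in\G_2$ gives $u = \psi(e)*\psi(e) \leq \psi(ee) = \psi(e)$, and since $d(\psi(e)) = u = d(u)$, the uniqueness in (OG3) forces $\psi(e) = u$. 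I will also use the elementary monotonicity $x \leq y \Rightarrow d(x) \leq d(y),\ r(x) \leq r(y)$ (apply (OG1), then (OG2) to the comparable pairs $x \leq y$ and $x^{-1} \leq y^{-1}$) and the natural-order description $x \leq y \iff x = y * d(x)$ of the inverse semigroup $(\cH,*)$.

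For item 2, start from the composite $g^{-1}g = d(g)$, legitimate since $d(g^{-1}) = r(g)$. Axiom (i) applied to $(g^{-1},g) \in \G_2$ yields $\psi(g^{-1}) * \psi(g) \leq \psi(g^{-1}g) = \psi(d(g))$, and (ii) rewrites the left side as $\psi(g)^{-1} * \psi(g)$. Evaluating the pseudoproduct directly, $d(\psi(g)^{-1}) \wedge r(\psi(g)) = r(\psi(g))$, so no restriction occurs and $\psi(g)^{-1} * \psi(g) = \psi(g)^{-1}\psi(g) = d(\psi(g))$. Hence $d(\psi(g)) \leq \psi(d(g))$.

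For item 3 fix $e \leq d(g)$, so $(g|e)$ has domain $e$ and $(g|e) \leq g$; by the reduction $\psi(e)$ is an object. The inequality $d(\psi(g|e)) \leq \psi(e) \wedge d(\psi(g))$ is a sandwich: monotonicity gives $d(\psi(g|e)) \leq d(\psi(g))$, while item 2 applied to $g|e$ gives $d(\psi(g|e)) \leq \psi(d(g|e)) = \psi(e)$. Since $\psi(e)$ is an object one computes $\psi(g) * \psi(e) = (\psi(g) | \psi(e) \wedge d(\psi(g)))$, so the reverse inequality is exactly the equality $\psi(g|e) = \psi(g) * \psi(e)$, i.e.\ that $\psi$ does not strictly shrink a restriction. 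This is the crux, and I would obtain it from the pseudoproduct form of the premorphism axiom — the prehomomorphism inequality $\psi(x)*\psi(y) \leq \psi(x*y)$ valid for all, not only composable, pairs — which is precisely the content packaged into \cite[Definition 2.6]{gould2011actions}. The corestriction statement (for $e \leq r(g)$) then follows by applying the restriction case to $g^{-1}$ and transporting through (ii) and (OG1), using $(e|g) = (g^{-1}|e)^{-1}$ and $r(k) = d(k^{-1})$.

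For item 1 take $(g,h) \in \G_2$. Axiom (i) gives $\psi(g)*\psi(h) \leq \psi(gh)$, so the natural order furnishes $\psi(g)*\psi(h) = \psi(gh) * d(\psi(g)*\psi(h))$, and it remains to identify this idempotent, relative to $\psi(gh)$, with $d(\psi(h))$; concretely I must prove $d(\psi(gh)) \wedge d(\psi(g)*\psi(h)) = d(\psi(gh)) \wedge d(\psi(h))$, which is exactly what makes $\psi(gh)*d(\psi(g)*\psi(h)) = \psi(gh)*d(\psi(h))$. Writing $m = d(\psi(g)) \wedge r(\psi(h))$, one unwinds $\psi(g)*\psi(h) = (\psi(g) | m)(m | \psi(h))$, so that $d(\psi(g)*\psi(h)) = r\big((\psi(h)^{-1} | m)\big) \leq d(\psi(h))$, and the required meet identity is then forced by item 3. \textbf{The main obstacle is this coupled domain bookkeeping together with the reverse inequality of item 3}: the premorphism axioms give only inequalities between $d(\psi(-))$ and $\psi(d(-))$, so the equalities must be pinned down by the restriction/corestriction calculus rather than by any homomorphism identity. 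A cleaner alternative, which I would present as a cross-check, is to pass through the Ehresmann-Schein-Nambooripad correspondence: regarding $\psi$ as a prehomomorphism $S \to T$ of the associated inverse semigroups makes item 1 the standard identity $\theta(s)\theta(t) = \theta(st)\,\theta(t)^{-1}\theta(t)$, from which items 2 and 3 fall out by specialization to idempotents and to the natural order.
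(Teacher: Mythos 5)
Your proposal should first be measured against what the paper actually does here: nothing is proved. The remark is pure citation — items 1 and 3 are taken from Gould--Hollings' Definition 2.6 (where they are definitional content for premorphisms, not derived facts), and item 2 is quoted from Gilbert's Lemma 4.2(b). So your attempt to reconstruct proofs from axioms (i)--(iii) of the paper's Definition goes beyond the paper, and much of it is sound: the reduction $\psi(\G_0) \subseteq \cH_0$ is correct (the uniqueness argument via (OG3) is exactly right), your item 2 is the standard proof of Gilbert's lemma, and your instinct about item 3 is vindicated — the inequality $d(\psi(g|e)) \leq \psi(e) \wedge d(\psi(g))$ does follow from (i)--(iii), but the reverse genuinely does not. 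A small counterexample: let $\G = \{u,v,g,h\}$ with $v < u$, $g$ an involutive loop at $u$, and $h = (g|v)$ an involutive loop at $v$; let $\cH$ have a chain of objects $U > W > V$ with involutive loops $G > H_W > H_V$ at them; set $\psi(u) = U$, $\psi(v) = W$, $\psi(g) = G$, $\psi(h) = H_V$. Then (i)--(iii) hold, yet $d(\psi(g|v)) = V \neq W = \psi(v) \wedge d(\psi(g))$. So item 3 must be imported from Definition 2.6, exactly as you (and the paper) do; whether that definition is phrased as the all-pairs inequality $\psi(x)*\psi(y) \leq \psi(x*y)$ or directly as the restriction condition, your derivation of item 3 from the all-pairs form is correct (the domain of $\psi(g)*\psi(e)$ is $\psi(e)\wedge d(\psi(g))$, and the sandwich closes).

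The genuine gap is in your item 1. The step ``the required meet identity is then forced by item 3'' does not go through: item 3 governs $\psi$-images of restrictions formed upstairs in $\G$, whereas the element you must control, $d(\psi(g)*\psi(h)) = d\left((m|\psi(h))\right)$ with $m = d(\psi(g)) \wedge r(\psi(h))$, is a corestriction formed downstairs in $\cH$, about which item 3 says nothing. Moreover, your ESN ``cross-check'' is circular at this point: regarding $\psi$ as an inverse semigroup premorphism already presupposes the all-pairs inequality, i.e.\ the very content of Definition 2.6 under discussion. The good news is that item 1 needs neither item 3 nor that extra content — it follows from (i)--(iii) alone once $\cH$ is inductive. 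Since $(gh, h^{-1}) \in \G_2$, axiom (i) with (ii) gives $\psi(gh)*\psi(h)^{-1} \leq \psi(g)$; right-multiplying by $\psi(h)$ (the pseudoproduct is monotone) and using associativity of the everywhere-defined pseudoproduct on $\cH$ yields $\psi(gh)*d(\psi(h)) = \left(\psi(gh)*\psi(h)^{-1}\right)*\psi(h) \leq \psi(g)*\psi(h)$. Conversely, $d(\psi(g)*\psi(h)) \leq d(\psi(h))$ forces $\psi(g)*\psi(h) = \left(\psi(g)*\psi(h)\right)*d(\psi(h))$, which by axiom (i) and monotonicity is $\leq \psi(gh)*d(\psi(h))$. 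This two-sided sandwich replaces your unjustified meet bookkeeping and completes item 1.
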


Every inverse semigroup $S$ with set of idempotents $E(S)$ has a partial natural order $\preceq$ given by: \begin{center} $s \preceq_S t$ if and only if there exists $e \in E(S)$ such that $s = te$. \end{center}

\begin{defi} \label{defpremorfseminv}
    \cite[p. 19]{hollings2010extending} Let $S$ and $T$ be inverse semigroups. A map $\psi : S \to T$ is said to be an \emph{inverse semigroup premorphism} if, for all $s,t \in S$, it holds:
    \begin{enumerate}
        \item[(i)] $\psi(s)\psi(t) \preceq \psi(st)$;

        \item[(ii)] $\psi(s)^{-1} = \psi(s^{-1})$;

        \item[(iii)] If $s \preceq t$, then $\psi(s) \preceq \psi(t)$.
    \end{enumerate}
\end{defi}

The Ehresmann-Schein-Nambooripad Theorem establishes a relation between the categories of inductive groupoids and inverse semigroups. From now on, we will refer to it as the ESN Theorem, written below.

\begin{theorem}[ESN] \label{teoesn}
    \begin{enumerate}
        \item[(i)] \cite[Theorem 4.1.8]{lawson1998inverse} The category of inductive groupoids and inductive functors and the category of inverse semigroups and homomorphisms are isomorphic.

        \item[(ii)]  \cite[Theorem 6.1]{hollings2010extending} The category of inductive groupoids and inductive groupoid premorphisms and the category of inverse semigroups and inverse semigroup premorphisms are isomorphic.
    \end{enumerate}
\end{theorem}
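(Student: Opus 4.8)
The plan is to prove both isomorphisms by exhibiting mutually inverse functors between the two categories and then checking that each respects the relevant class of arrows. Since the object-level correspondence is common to (i) and (ii), I would establish it once and afterward analyze homomorphisms (for (i)) and premorphisms (for (ii)) separately.

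First I would describe the two constructions on objects. From an inverse semigroup $S$ I build an inductive groupoid on the underlying set of $S$ by declaring the objects to be the idempotents $E(S)$, setting $d(s) = s^{-1}s$ and $r(s) = ss^{-1}$, defining the groupoid product $s \cdot t$ to be defined exactly when $s^{-1}s = tt^{-1}$ with value the semigroup product $st$, taking the inverse to be $s \mapsto s^{-1}$, and using the natural partial order $\preceq$. One checks $E(S)$ is a meet semilattice with $e \wedge f = ef$, that (OG1)--(OG3*) hold with restriction $(s|e) = se$ and corestriction $(e|s) = es$, and hence that $\G$ is inductive. Conversely, from an inductive groupoid $\G$ I endow the underlying set with the pseudoproduct $*$ of the excerpt and verify it is an associative binary operation making $\G$ an inverse semigroup whose idempotents are $\G_0$ and whose natural order recovers $\leq$. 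That the two passages are mutually inverse reduces to two identities: for composable $g,h$ the pseudoproduct $g * h$ equals the groupoid product $gh$, and for arbitrary $s,t$ one has $st = (s|e)(e|t)$ with $e = d(s) \wedge r(t)$, so the semigroup product is recovered as a pseudoproduct.

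The heart of the object-level step, and the main obstacle for (i), is the associativity of $*$, which is what upgrades a groupoid to a semigroup. This follows from the compatibility of restrictions and corestrictions with composition (\cite[Prop.~4.1.3]{lawson1998inverse}) and from \cite[Lemma~4.1.6]{lawson1998inverse}, and is exactly the pseudoassociativity automatically enjoyed by inductive groupoids. Once associativity is available, functoriality for (i) is routine: an inductive functor preserves composition, identities, order, and meets, hence preserves restrictions and therefore $*$, so it is a semigroup homomorphism; conversely a homomorphism preserves $d,r$ and $\preceq$ and so is an inductive functor.

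For part (ii) the genuinely delicate point is that an inverse semigroup premorphism condition $\psi(s)\psi(t) \preceq \psi(st)$ is imposed for \emph{all} pairs, whereas an inductive groupoid premorphism condition $\psi(g)*\psi(h) \leq \psi(gh)$ concerns only \emph{composable} pairs. The bridge is precisely the decomposition $st = (s|e)(e|t)$ with $e = d(s)\wedge r(t)$, which writes every semigroup product as a composable groupoid product of restrictions; combined with order-preservation and Remark \ref{obspreforte}(3) (which controls $d(\psi(g|e))$ and $r(\psi(e|g))$), it lets one pass between the two formulations of condition (i). The involution axiom (ii) and the order axiom (iii) transfer directly, and Remark \ref{obspreforte}(1) reconciles the inequality $\psi(g)*\psi(h) \leq \psi(gh)$ with the equality $\psi(g)*\psi(h) = \psi(gh)*d(\psi(h))$. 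I expect the careful bookkeeping of domains---ensuring that a single inequality on the totally-defined semigroup side encodes both the partial-definedness and the inequality on the groupoid side---to be where the real work lies.
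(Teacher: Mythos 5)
This theorem is stated in the paper as a known result, with the proofs deferred entirely to \cite[Theorem 4.1.8]{lawson1998inverse} and \cite[Theorem 6.1]{hollings2010extending}; the paper contains no argument of its own to compare against. Your sketch is a faithful outline of the standard proof in those references --- the restricted-product/pseudoproduct constructions on objects, associativity of $*$ via \cite[Proposition 4.1.3]{lawson1998inverse} and \cite[Lemma 4.1.6]{lawson1998inverse}, and the decomposition $st=(s|e)(e|t)$ with $e=d(s)\wedge r(t)$ as the bridge between the totally-defined semigroup premorphism condition and the composable-pairs groupoid premorphism condition --- so it is essentially the same approach as the cited sources.
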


Let $S$ be an inverse semigroup, and $E(S)$ the subset of idempotent elements of $S$. We adopt the definition of partial inverse semigroup action as stated in \cite[Definition 2.1]{beuter2019simplicity} with a slight weakening of the original assumptions. Here, we assume that $A_s$ is an ideal of $A_{ss^{-1}}$ instead of asking that $A_s$ is an ideal of $A$, for each $s \in S$.

\begin{defi}A partial action of an inverse semigroup $S$ on a ring $A$ is a collection $\alpha = (A_s,\alpha_s)_{s \in S}$ of ideals $A_{ss^{-1}}$ of $A$, ideals $A_s$ of $A_{ss^{-1}}$ and ring isomorphisms $\alpha_s : A_{s^{-1}} \to A_s$ such that 
\begin{enumerate}
    \item[(P1')] $A = \sum_{e \in E(S)} A_e$;

    \item[(P2')] $\alpha_s(A_{s^{-1}} \cap A_t) = A_s \cap A_{st}$, for all $s,t \in S$;

    \item[(P3')] $\alpha_s(\alpha_t(a)) = \alpha_{st}(a)$, for all $a \in A_{t^{-1}} \cap A_{(st)^{-1}}$, $s,t \in S$.
\end{enumerate} \end{defi}

The definitions of unital and preunital partial actions of inverse semigroups are analogous to the case of ordered groupoids. Next, we define the concept of globalization of a partial inverse semigroup action.

\begin{defi} \label{defglobseminv}
    Let $\alpha = (A_s,\alpha_s)_{s \in S}$ be a partial inverse semigroup action of an inverse semigroup $S$ on a ring $A$. A global action $\beta = (B_s,\beta_s)_{s \in S}$ of $S$ on a ring $B$ is said to be a \emph{globalization} of $\alpha$ if, for all $e \in E(S)$, there exists a ring monomorphism $\varphi_e : A_e \to B_e$ such that:
\begin{enumerate}
    \item[(i)] $\varphi_e(A_e)$ is an ideal of $B_e$;
    
    \item[(ii)] $\varphi_{ss^{-1}}(A_s) = \varphi_{ss^{-1}}(A_{ss^{-1}}) \cap \beta_s(\varphi_{s^{-1}s}(A_{s^{-1}s}))$;
    
    \item[(iii)] $\beta_s \circ \varphi_{s^{-1}s}(a) = \varphi_{ss^{-1}}  \circ \alpha_s(a)$, for all $a \in A_{s^{-1}}$;
    
    \item[(iv)] $B_s = \sum_{tt^{-1} = ss^{-1}} \beta_t\left ( \varphi_{t^{-1}t}(A_{t^{-1}t}) \right)$.
\end{enumerate}
\end{defi}


We will denote by $\mathsf{Set}$ the category of sets and functions and by $\mathsf{Ring}$ the category of rings and ring homomorphisms. Given $A,B$ in $\text{Obj}(\mathsf{Ring})$ and $\varphi: A \to B$ in $\text{Mor}(\mathsf{Ring})$, we consider the \emph{forgetful functor} $F : \mathsf{Ring}  \to \mathsf{Set}$, where the image of the ring $A$ is the set $A$ (without the ring structure), and the image of the ring homomorphism $\varphi: A \to B$ is the function of sets $\varphi: A \to B$ (without the homomorphisms properties).

Let $A$ be a ring. We define $\mathcal{I}(A) := \mathcal{I}(F(A)) = \{ f : I \to J : I,J \subseteq A, f \text{ is a bijection}\}$. With the usual composition, we have that $\mathcal{I}(A)$ is an inverse semigroup. 

Let $S$ be an inverse semigroup and consider a partial action $\alpha = (A_s,\alpha_s)_{s \in S}$ of $S$ on a ring $A$. 

\begin{lemma}\label{alpha_induz}
 The partial action $\alpha$ gives rise to a premorphism $\gamma_{\alpha} : S \to \mathcal{I}(A)$, where $\gamma_{\alpha}(s)$ is a bijection between the subsets $\text{dom}(\gamma_\alpha(s)) = F(A_{s^{-1}})$ and $\text{Im}(\gamma_\alpha(s)) = F(A_s)$ of $F(A)$, defined as $\gamma_{\alpha}(s)(a) = \alpha_s(a)$, for all $a \in F(A_{s^{-1}})$.   
\end{lemma}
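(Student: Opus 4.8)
The plan is to verify directly that the map $\gamma_\alpha : S \to \mathcal{I}(A)$ defined by $\gamma_\alpha(s) = \alpha_s$ (viewed as a bijection of the underlying sets $F(A_{s^{-1}}) \to F(A_s)$) satisfies the three axioms of an inverse semigroup premorphism from Definition \ref{defpremorfseminv}. First I would confirm that $\gamma_\alpha$ is well defined as an element of $\mathcal{I}(A)$: since $\alpha_s : A_{s^{-1}} \to A_s$ is a ring isomorphism, applying the forgetful functor yields a bijection between the subsets $F(A_{s^{-1}})$ and $F(A_s)$ of $F(A)$, so $\gamma_\alpha(s) \in \mathcal{I}(A)$ for each $s \in S$. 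The point here is that we are deliberately discarding the ring structure and only keeping the bijection, since $\mathcal{I}(A)$ is an inverse semigroup of partial bijections.

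Next I would check axiom (ii), namely $\gamma_\alpha(s)^{-1} = \gamma_\alpha(s^{-1})$. This follows from the fact that in any partial groupoid/semigroup action one has $\alpha_s^{-1} = \alpha_{s^{-1}}$ (the inverse semigroup analogue of property (i) recalled after Definition \ref{par-groupoid}); passing to underlying sets, the inverse of the bijection $\gamma_\alpha(s)$ in $\mathcal{I}(A)$ is exactly the bijection $\gamma_\alpha(s^{-1})$, with matching domains and ranges. For axiom (iii), order preservation, I would use the natural partial order $\preceq$ on both $S$ and $\mathcal{I}(A)$: if $s \preceq t$ in $S$, then $s = te$ for some $e \in E(S)$, and one shows that $\gamma_\alpha(s)$ is the restriction of $\gamma_\alpha(t)$ to the subset $F(A_{s^{-1}})$, which is precisely what $\gamma_\alpha(s) \preceq \gamma_\alpha(t)$ means in $\mathcal{I}(A)$ (a partial bijection is below another iff it is a restriction of it). This uses (P2') to identify the relevant domain $A_{s^{-1}} = A_{s^{-1}} \cap A_{t^{-1}}$ appropriately.

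The main obstacle, and the step I would treat most carefully, is axiom (i): $\gamma_\alpha(s)\gamma_\alpha(t) \preceq \gamma_\alpha(st)$. Here the composition $\gamma_\alpha(s)\gamma_\alpha(t)$ in $\mathcal{I}(A)$ is the composite of partial bijections, whose domain is $\{a : a \in \mathrm{dom}(\gamma_\alpha(t)),\ \gamma_\alpha(t)(a) \in \mathrm{dom}(\gamma_\alpha(s))\}$, that is $\alpha_t^{-1}(A_{s^{-1}} \cap A_t) = A_{t^{-1}} \cap A_{(st)^{-1}}$ by (P2'). On this domain, property (P3') gives $\alpha_s(\alpha_t(a)) = \alpha_{st}(a)$, so the composite bijection agrees with $\gamma_\alpha(st)$ wherever it is defined, and its domain $A_{t^{-1}} \cap A_{(st)^{-1}}$ is contained in the domain $A_{(st)^{-1}}$ of $\gamma_\alpha(st)$. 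Thus $\gamma_\alpha(s)\gamma_\alpha(t)$ is a restriction of $\gamma_\alpha(st)$, which is exactly $\gamma_\alpha(s)\gamma_\alpha(t) \preceq \gamma_\alpha(st)$ in $\mathcal{I}(A)$. I would present these domain computations explicitly, since the subtlety is entirely in matching the set-theoretic domains of the composite against the domain of $\gamma_\alpha(st)$; once (P2') and (P3') are invoked, the verification is routine.
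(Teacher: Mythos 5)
Your proposal is correct and follows essentially the same route as the paper, which simply notes that axiom (i) follows from (P2') and (P3'), axiom (ii) from $\alpha_s^{-1}=\alpha_{s^{-1}}$ (citing \cite[Proposition 2.2(c)]{beuter2019simplicity}), and axiom (iii) from monotonicity of the partial action. Your version merely spells out the domain computation for the composite in $\mathcal{I}(A)$, which the paper leaves implicit.
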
 
\begin{proof} 
Condition (i) of premorphism follows from (P2') and (P3') of the definition of partial action. Condition (ii) of the premorphism follows from \cite[Proposition 2.2(c)]{beuter2019simplicity}. Condition (iii) of the premorphism follows from (PO).
\end{proof}


\begin{theorem}
Let $\alpha$ be a preunital partial action of an inverse semigroup $S$ on a ring $A$. Then $\alpha$ has a globalization $\beta$ if and only if $\alpha$ is unital. Moreover, $\beta$ is unique up to equivalences.
\end{theorem}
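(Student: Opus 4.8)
The plan is to reduce the inverse semigroup statement to the already-proven groupoid result (Theorem \ref{teounicglob}) via the ESN Theorem, rather than repeating the globalization construction from scratch. The key observation is that the ESN correspondence (Theorem \ref{teoesn}) furnishes, for every inverse semigroup $S$, an associated inductive groupoid $\G(S)$ whose underlying set is $S$, whose partial order is the natural order $\preceq$, and whose partial product is the pseudoproduct (defined when $d(g)\wedge r(h)$ exists), while the meaningful product $st$ in $S$ corresponds exactly to the pseudoproduct $s * t$ in $\G(S)$. Under this dictionary, a partial inverse semigroup action $\alpha=(A_s,\alpha_s)_{s\in S}$ with axioms (P1')--(P3') should correspond precisely to a strong P.O. action of $\G(S)$: condition (P2') is exactly the strength condition rephrased via the pseudoproduct, and (P3') becomes the groupoid associativity axiom (P3). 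The premorphism $\gamma_\alpha$ of Lemma \ref{alpha_induz} is the tool that makes this translation rigorous.

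First I would verify carefully that the assignment $S \mapsto \G(S)$ sends a preunital partial inverse semigroup action to a preunital, strong P.O. action of the inductive groupoid $\G(S)$, checking axiom by axiom: the idempotents $E(S)$ become the object set $\G(S)_0$, condition (P1') becomes $A=\sum_{e\in\G_0}A_e$, and the ordered-action compatibility (PO) follows from premorphism condition (iii) together with the fact that $\preceq$ is the natural order. The crucial point is matching (P2') with the strong condition $A_{(e|g)}=A_e\cap A_g$ of Definition \ref{defacaoparcforte}; I would use the pseudoproduct characterization (PS) from Lemma \ref{condequivacparcialforte}, since $\alpha_s \circ \alpha_t = \alpha_{s*t}\circ\mathrm{Id}$ is precisely the content of (P2')--(P3') read through the pseudoproduct. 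Since every inductive groupoid is pseudoassociative (as noted in the remark after the definition of pseudoassociative), the hypotheses of Theorem \ref{teounicglob} are automatically satisfied for $\G(S)$.

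Next I would observe that the notion of globalization of a partial inverse semigroup action (Definition \ref{defglobseminv}) translates, term by term, into the notion of minimal globalization of the corresponding P.O. action: condition (iv) of Definition \ref{defglobseminv}, namely $B_s=\sum_{tt^{-1}=ss^{-1}}\beta_t(\varphi_{t^{-1}t}(A_{t^{-1}t}))$, is exactly the minimality condition (iv') with $tt^{-1}=ss^{-1}$ corresponding to $r(t)=r(s)$, and conditions (i)--(iii) match verbatim under the identification $ss^{-1}=r(s)$ and $s^{-1}s=d(s)$. A global inverse semigroup action corresponds to an ordered global action of $\G(S)$, since the natural order on a group-like piece collapses appropriately. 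With these identifications in place, Theorem \ref{teounicglob} applied to the strong P.O. action of $\G(S)$ yields that $\alpha$ admits a minimal globalization if and only if $\alpha$ is unital, and that this globalization is unique up to equivalence; translating the resulting ordered global action of $\G(S)$ back to an inverse semigroup action via the inverse direction of the ESN Theorem completes the proof.

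The main obstacle I anticipate is the faithful bookkeeping of the dictionary between the two settings, in particular confirming that (P2') corresponds exactly to the strength hypothesis and not merely to the weaker groupoid axiom (P2); this is where the premorphism $\gamma_\alpha$ and the pseudoproduct identity (PS) must be invoked with care. A secondary subtlety is ensuring that ``globalization'' for inverse semigroups matches \emph{minimal} globalization rather than arbitrary globalization of the groupoid action, which is essential for the uniqueness clause, since Example \ref{exsemunicidade} shows non-minimal globalizations need not be unique. Once these correspondences are pinned down, the existence, the unital criterion, and the uniqueness all follow directly from Theorem \ref{teounicglob}.
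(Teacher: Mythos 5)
Your proposal follows essentially the same route as the paper: translate the partial inverse semigroup action into a preunital strong P.O.\ action of the inductive groupoid $\mathbb{G}(S)$ via the premorphism of Lemma \ref{alpha_induz} and the ESN Theorem, observe that inductive groupoids are pseudoassociative and that the inverse-semigroup notion of globalization matches \emph{minimal} globalization under $ss^{-1}\leftrightarrow r(s)$, apply Theorem \ref{teounicglob}, and transport the result back. The only cosmetic difference is that the paper certifies the strong/(PS) condition by citing the Gould--Hollings characterization of inductive groupoid premorphisms rather than re-deriving it from (P2')--(P3') through Lemma \ref{condequivacparcialforte}, but this is the same dictionary you describe.
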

\begin{proof}
    Let $\gamma_\alpha$ be the inverse semigroup premorphism from $S$ to $\mathcal{I}(A)$ defined in Lemma \ref{alpha_induz}. Let $\mathbb{G}(\gamma_\alpha) : \mathbb{G}(S) \to \mathbb{G}(\mathcal{I}(A))$ be the inductive groupoid premorphism from the inductive groupoid $\mathbb{G}(S)$ to the inductive groupoid $\mathbb{G}(\mathcal{I}(A))$ associated to $\gamma_\alpha$ via the ESN Theorem. Since $\mathbb{G}(S)$ is inductive, it is pseudoassociative.

    Notice that the functor $\mathbb{G}$ of the ESN Theorem is such that $F(S) = F(\mathbb{G}(S))$ and $F(\gamma_\alpha) = F(\mathbb{G}(\gamma_\alpha))$ (considering the forgetful functor $F$ applied in the categories of inverse semigroups and of inductive groupoids, respectively). Hence, $\text{dom}(\mathbb{G}(\gamma_{\alpha})(s)) = F(A_{s^{-1}})$ and $\text{Im}(\mathbb{G}(\gamma_{\alpha})(s)) = F(A_s)$. Therefore, defining $\omega = (A_s, \omega_s)_{s \in \mathbb{G}(S)}$ by $\omega_s : A_{s^{-1}} \to A_s$, with $\omega_s(a) = \mathbb{G}(\gamma_\alpha)(s)(a) = \alpha_s(a)$, for all $a \in A_{s^{-1}}$, $s \in \mathbb{G}(S)$, we see that $\omega$ is a preunital strong P.O. action of $\mathbb{G}(S)$ on the ring $A$. Indeed, the conditions (P2) and (P3) are equivalent to the condition (i) of inductive groupoid premorphism. The condition (PO) corresponds to the condition (iii), while the condition (ii) of inductive groupoid premorphism is equivalent to \cite[Lemma 1.1]{bagio2012partial}(i). Finally, \cite[Definition 2.6]{gould2011actions} guarantees that (PS) holds, since all inductive groupoid premorphism $\psi$ satisfies $r(\psi(e|g)) = \psi(e) \wedge r(\psi(g))$.
    
    
   Now, we use Theorem \ref{teounicglob} to conclude that $\omega$ admits minimal globalization if and only if is unital. Since $\alpha$ is unital if and only if $\omega$ is unital, $\alpha$ admits globalization if and only if $\omega$ has a minimal globalization. In this case, we obtain a global action $\eta = (B_s,\eta_s)_{s \in \mathbb{G}(S)}$ of $\mathbb{G}(S)$ on a ring $B$ that is a minimal globalization of $\omega$. 
   
   Next, analogously to the construction of $\gamma_\alpha$, define the inductive groupoid premorphism $\rho_{\eta} : \mathbb{G}(S) \to \mathbb{G}(\mathcal{I}(A))$ such that the function $\rho_\eta(s) : F(B_{s^{-1}}) \to F(B_s)$ is given by $\rho_\eta(s)(b) = \eta_s(b)$, for all $b \in F(B_{s^{-1}})$. Notice that $\rho_\eta$ is indeed an inductive functor, because $\eta$ being global implies $B_{r(s)} = B_s$, for all $s \in \mathbb{G}(S)$, and consequently $\rho_\eta(s)\rho_\eta(t) = \rho_{\eta}(st)$, for all $(s,t) \in \mathbb{G}(S)_2$. 

    Applying the inverse functor $\mathbb{S}$ of $\mathbb{G}$ via the ESN Theorem, we obtain an inverse semigroup homomorphism $\mathbb{S}(\rho_\eta) : S \to \mathcal{I}(A)$. Defining $\beta = (B_s,\beta_s)_{s \in S}$ by $\beta_s : B_{s^{-1}} \to B_s$ with $\beta_s(b) = \mathbb{S}(\rho_\eta)(s)(b) = \eta_s(b)$, for all $b \in B_{s^{-1}}$, $s \in S$, we have that $\eta$ being minimal globalization of $\omega$ implies that $\beta$ is the desired globalization of $\alpha$. The uniqueness (up to equivalence) of $\beta$ follows from Theorem \ref{teounicglob}.
    
\end{proof}

Consider the definition of partial skew inverse semigroup ring given in \cite{beuter2019simplicity}, that is, $A \ltimes_\alpha S = \mathcal{L}/\mathcal{N}$, where $\alpha$ is a unital partial action of $S$ on a ring $A$, $\mathcal{L} = \bigoplus_{s \in S} A_s\delta_s$ with component-wise addition and product given by $(a_s\delta_s)(b_t\delta_t) = \alpha_s(\alpha_{s^{-1}}(a_s)b_t)\delta_{st}$, and $\mathcal{N} = \langle a_s\delta_s - a_s\delta_t : s \leq t \text{ and } a_s \in A_s \rangle$ is an ideal of $\mathcal{L}$. Then the same argument used in Theorem \ref{contextodemorita} also proves the following result.

\begin{theorem}
    Let $\alpha$ be a unital partial inverse semigroup action of $S$ on a ring $A$. Assume that $E(S)$ is finite. Let $\beta$ be the minimal globalization of $\alpha$, which is a global action of $S$ on a ring $B$. Then $A \ltimes_\alpha S$ and $B \ltimes_\beta S$ are Morita equivalent.
\end{theorem}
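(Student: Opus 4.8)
The plan is to reduce the claim to the groupoid-level Morita result already established in Theorem \ref{contextodemorita}, using the ESN Theorem as the translation bridge exactly as in the preceding globalization theorem for inverse semigroups. The key observation is that the partial skew inverse semigroup ring $A \ltimes_\alpha S$ and the partial skew ordered groupoid ring $A \ltimes_\omega^o \mathbb{G}(S)$ should be isomorphic (or at least identical as rings), where $\omega$ is the preunital strong P.O. action of $\mathbb{G}(S)$ on $A$ corresponding to $\alpha$ via ESN, constructed in the proof of the previous theorem. Indeed, the defining ideal $\mathcal{N} = \langle a_s\delta_s - a_s\delta_t : s \preceq t \rangle$ in the inverse semigroup setting is precisely the image of the ideal $N$ used to define $A \ltimes_\omega^o \mathbb{G}(S)$, since the natural partial order $\preceq$ on $S$ translates under ESN into the order $\leq$ on $\mathbb{G}(S)$. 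Similarly $\beta$ corresponds to the minimal globalization $\eta$ of $\omega$, so $B \ltimes_\beta S \cong B \ltimes_\eta^o \mathbb{G}(S)$.

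First I would make the identifications precise: record that $E(S)$ finite is equivalent to $\mathbb{G}(S)_0$ finite, so that Proposition \ref{propmorita} and Theorem \ref{contextodemorita} apply to $\omega$ and $\eta$ over the groupoid $\mathbb{G}(S)$. Next I would verify that the multiplication on $\mathcal{L} = \bigoplus_{s \in S} A_s\delta_s$, given by $(a_s\delta_s)(b_t\delta_t) = \alpha_s(\alpha_{s^{-1}}(a_s)b_t)\delta_{st}$, matches the skew groupoid product after passing to the quotient by $\mathcal{N}$; the subtlety is that in the groupoid $\mathbb{G}(S)$ the product $st$ need not be defined, but the pseudoproduct and the order relations absorbed into $N$ reconcile the two, because in the quotient ring every element is represented by the restricted/corestricted morphisms where composition is defined. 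This is where Lemma \ref{condequivacparcialforte} and the strongness of $\omega$ do the work, guaranteeing that the semigroup product corresponds to the pseudoproduct $s * t$ on the groupoid side.

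Once the isomorphisms $A \ltimes_\alpha S \cong A \ltimes_\omega^o \mathbb{G}(S)$ and $B \ltimes_\beta S \cong B \ltimes_\eta^o \mathbb{G}(S)$ are in hand, I would simply invoke Theorem \ref{contextodemorita} applied to the unital strong P.O. action $\omega$ and its minimal globalization $\eta$ of the pseudoassociative (indeed inductive) groupoid $\mathbb{G}(S)$, which directly yields that $A \ltimes_\omega^o \mathbb{G}(S)$ and $B \ltimes_\eta^o \mathbb{G}(S)$ are Morita equivalent, and hence so are $A \ltimes_\alpha S$ and $B \ltimes_\beta S$ since Morita equivalence is preserved under ring isomorphism.

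The main obstacle I anticipate is establishing the ring isomorphism $A \ltimes_\alpha S \cong A \ltimes_\omega^o \mathbb{G}(S)$ rigorously, specifically matching the two quotient constructions: one must check that the relation $a_s\delta_s = a_s\delta_t$ for $s \preceq t$ imposed by $\mathcal{N}$ on the inverse semigroup side corresponds exactly, under the ESN correspondence of underlying sets $F(S) = F(\mathbb{G}(S))$, to the relation $a\delta_g = a\delta_h$ for $g \leq h$ imposed by $N$ on the groupoid side, and that the induced products agree. The phrase in the statement that \emph{the same argument used in Theorem \ref{contextodemorita} also proves the following result} suggests the authors intend to bypass the explicit isomorphism and instead re-run the Morita-context computation directly; in that case the obstacle becomes verifying that the four identities of Proposition \ref{propmorita} hold verbatim in the inverse semigroup skew ring, which reduces to the same idempotent and restriction bookkeeping, now phrased with $ss^{-1}$ in place of $r(g)$ and the natural order $\preceq$ in place of $\leq$.
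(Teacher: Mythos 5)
Your final paragraph correctly guesses what the paper actually does: it offers nothing beyond re-running the Morita-context computation of Proposition \ref{propmorita} and Theorem \ref{contextodemorita} verbatim with $R = A \ltimes_\alpha S$ and $T = B \ltimes_\beta S$. Your \emph{primary} route, however, contains a genuine gap: the claimed isomorphism $A \ltimes_\alpha S \cong A \ltimes_\omega^o \mathbb{G}(S)$ is false in general. Although $\mathcal{L}$ and $A \ltimes_\omega \mathbb{G}(S)$ share the same underlying abelian group and the ideals $\mathcal{N}$ and $N$ have the same generating set, the two multiplications differ: in $\mathcal{L}$ the product $(a_s\delta_s)(b_t\delta_t)$ always equals $\alpha_s(\alpha_{s^{-1}}(a_s)b_t)\delta_{st}$, whereas in the groupoid ring it is $0$ unless $s^{-1}s = tt^{-1}$. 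Since products of cosets are computed on representatives, this zero persists after passing to the quotient by $N$, and the ideals generated by the common family of elements $a\delta_s - a\delta_t$ are genuinely different subsets. Concretely, let $S = \{e,f\}$ be the two-element semilattice with $f < e$, let $A = k \times k$ for a field $k$, $A_e = A$, $A_f = k \times 0$, all maps identities. This is a unital (indeed global) action and the associated $\omega$ is strong, yet $\mathcal{L}/\mathcal{N} \cong A_e \cong k \times k$, while inside $A \ltimes_\omega \mathbb{G}(S) \cong A_e \times A_f$ one finds $N = A_f\delta_e \oplus A_f\delta_f$, so $A \ltimes_\omega^o \mathbb{G}(S) \cong A_e/A_f \cong k$. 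The discrepancy already occurs among idempotents, so no appeal to strongness, pseudoproducts, or Lemma \ref{condequivacparcialforte} can repair it: the ESN correspondence identifies the actions, not the skew rings.

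Consequently Theorem \ref{contextodemorita} cannot be invoked as a black box via transport of structure; the Morita context must be built directly inside $A \ltimes_\alpha S$ and $B \ltimes_\beta S$, which is your fallback and is the paper's route. On that branch your proposal correctly locates the work to be done (finiteness of $E(S)$ and unitality of $\alpha$ give $1_R = \sum_{e \in E(S)} \overline{1_e\delta_e}$, $T$ has local units, and the analogues of (i)--(iv) of Proposition \ref{propmorita} must be checked with $ss^{-1}$ in place of $r(g)$ and the always-defined semigroup product in place of the restricted groupoid product), but you do not carry out these verifications, so the argument remains incomplete there as well. The essential correction is to abandon the intermediate ring isomorphism and verify the four identities in the inverse-semigroup skew ring itself.
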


\section*{Declarations}

\subsection*{Author's Contribution}
All authors wrote the main manuscript text and reviewed the manuscript.

\subsection*{Competing Interests}
 The authors have no competing interests as defined by Springer, or other interests that might be perceived to influence the results and/or discussion reported in this paper.

\subsection*{Ethical Approval}
Not applicable.

\subsection*{Availability of Data and Materials}
 Data sharing not applicable to this article as no datasets were generated or analysed during the current study.

\subsection*{Funding}
 There was no funding on the realization of this work.


\begin{thebibliography}{1}
    \bibitem{AB}
    M. Alves and E. Batista. Enveloping actions for partial Hopf actions. \emph{Comm. Algebra}, 38(8):2872–2902, 2010.
  
  \bibitem{avila2020notions}
  J.  Ávila and V. Marín. The notions of center, commutator and inner isomorphism for groupoids. \emph{Ingeniería y Ciencia}, 16(31):7–26, 2020.

  \bibitem{avila2020isomorphism}
  J.  Ávila, V. Marín, and H. Pinedo. Isomorphism theorems for groupoids and some applications. \emph{Int. J. Math. Math. Sci.}, 2020.

  \bibitem{bagio2010partial}
  D. Bagio, D. Flores, and A. Paques. Partial actions of ordered groupoids on rings. \emph{J. Algebra Appl.}, 9(03):501–517, 2010.
  
  \bibitem{bagio2012partial}
  D. Bagio and A. Paques. Partial groupoid actions: globalization, Morita theory, and Galois theory. \emph{Commun. Algebra}, 40(10):3658–3678, 2012.

  \bibitem{bagio2022galois}
  D. Bagio, A. Sant’Ana, and T. Tamusiunas. Galois correspondence for group-type partial actions of groupoids. \emph{Bull. Belg. Math. Soc.}, 28(5):745–767, 2022.

  \bibitem{batista2017partial}
  E. Batista. Partial actions: what they are and why we care. \emph{B. Belg. Math. Soc-Sim.}, 24(1):35–71, 2017.

  \bibitem{beier2023generalizations}
  G. Beier, C. Garcia, W. G. Lautenschlaeger, J. Pedrotti, and T. Tamusiunas. Generalizations of Lagrange and Sylow theorems for groupoids. \emph{Sao Paulo J. Math.}, pages 1–20, 2023.

  \bibitem{beuter2019simplicity}
  V. Beuter, D. Gonçalves, J. Öinert, and D. Royer. Simplicity of skew inverse semigroup rings with applications to Steinberg algebras and topological dynamics. In \emph{Forum Math.}, volume 31, pages 543–562. De Gruyter, 2019.

  \bibitem{cortes2017characterisation}
  W. Cortes and T. Tamusiunas. A characterisation for a groupoid Galois extension using partial isomorphisms. \emph{Bull. Aust. Math. Soc.}, 96(1):59–68, 2017.

  \bibitem{della2023groupoid}
  S. Della Flora, D. Flores, A. Morgado, and T. Tamusiunas. Groupoid actions on sets, duality and a morita context. \emph{Commun. Algebra}, 51(1):178–190, 2023.

  \bibitem{dokuchaev2011partial}
  M. Dokuchaev. Partial actions: a survey. \emph{Contemp. Math.}, 537:173–184, 2011.

  \bibitem{dokuchaev2019recent}
  M. Dokuchaev. Recent developments around partial actions. \emph{Sao Paulo J. Math. Sci.}, 13(1):195–247, 2019.

  \bibitem{dokuchaev2005associativity}
  M. Dokuchaev and R. Exel. Associativity of crossed products by partial actions, enveloping actions and partial representations. \emph{Trans. Am. Math. Soc.}, 357(5):1931–1952, 2005.

  \bibitem{exel1998partial}
  R. Exel. Partial actions of groups and actions of inverse semigroups. \emph{Proc. Am. Math. Soc.}, 126 (12):3481–3494, 1998.

  \bibitem{exel2017partial}
    R. Exel. \emph{Partial dynamical systems, Fell bundles and applications}, volume 224. Amer. Math. Soc., 2017.
  
  \bibitem{ferrero2008partial}
    M. Ferrero and J. Lazzarin. Partial actions and partial skew group rings. \emph{J. Algebra}, 319(12):5247–5264, 2008.

  \bibitem{garta2024}
  C. Garcia and T. Tamusiunas. A Galois correspondence for K$_\beta$-rings. \emph{J. Algebra}, 640:74–105, 2024.

  \bibitem{garcia1991morita}
    J. García and J. J. Simón. Morita equivalence for idempotent rings. \emph{J. Pure Appl. Algebra}, 76(1):39–56, 1991.

  \bibitem{gilbert2005actions}
  N. Gilbert. Actions and expansions of ordered groupoids. \emph{J. Pure Appl. Algebra}, 198(1-3):175–195, 2005.

  \bibitem{gould2011actions}
V. Gould and C. Hollings. Actions and partial actions of inductive constellations. In \emph{Semigr. Forum}, volume 82, pages 35–60. Springer, 2011.

  \bibitem{hollings2010extending}
  C. Hollings. Extending the Ehresmann-Schein-Nambooripad theorem. In \emph{Semigr. Forum}, volume 80, pages 453–476. Springer, 2010.
  
  \bibitem{kellaw2000}
  J. Kellendonk and M. Lawson. Tiling semigroups. \emph{J. Algebra}, 224:140–150, 2000.

  \bibitem{kellaw2004}
  J. Kellendonk and M. Lawson. Partial actions of groups. Internat. \emph{J. Algebra Comput.}, 14(1):87–114, 2004.

  \bibitem{kudryavtseva}
    G. Kudryavtseva and V. Laan. Globalization of partial actions of semigroups. \emph{Semigroup Forum}, 107:200-217, 2023.

  \bibitem{lawson1998inverse}
  M. V. Lawson. Inverse semigroups, the theory of partial symmetries. \emph{World Scientific}, 1998.

  \bibitem{lata2021galois}
    W. G. Lautenschlaeger and T. Tamusiunas. Maximal ordered groupoids and a Galois correspondence for inverse semigroup orthogonal actions. \emph{Appl. Categ. Struct.}, 31(5):31, 2023.
    
  \bibitem{marin2022groupoids}
  V. Marín and H. Pinedo. Groupoids: Direct products, semidirect products and solvability. \emph{Algebra Discret. Math.}, 33(2), 2022.

  \bibitem{nystedt}
  P. Nystedt. Partial category actions on sets and topological spaces. \emph{Commun. Algebra}, 46(2):671–683, 2018.

  \bibitem{nystedt2018artinian}
  P. Nystedt, J. Öinert, and H. Pinedo. Artinian and noetherian partial skew groupoid rings. \emph{J. Algebra}, 503:433–452, 2018.

  \bibitem{paques2013galois}
  A. Paques and T. Tamusiunas. A Galois-Grothendieck-type correspondence for groupoid actions. \emph{Algebra Discret. Math.}, 2014.

  \bibitem{paques2018galois}
  A. Paques and T. Tamusiunas. The Galois correspondence theorem for groupoid actions. \emph{J. Algebra}, 509:105–123, 2018.

  \bibitem{paques2021galois}
  A. Paques and T. Tamusiunas. On the Galois map for groupoid actions. \emph{Commun. Algebra}, 49(3):1037–1047, 2021.

  \bibitem{pedrotti2023injectivity}
  J. Pedrotti and T. Tamusiunas. Injectivity of the Galois map. \emph{Bull. Braz. Math. Soc.}, 54(1):1–11, 2023.

  \bibitem{Rimlinger}
  F. Rimlinger. R-trees and normalization of pseudogroups. \emph{Experimental Math.}, 1:95–114, 1992.

  \bibitem{a1991ring}
  L. H. Rowen. Ring theory (student edition), 1991.

\end{thebibliography}
\end{document}